\documentclass[hidelinks]{siamart251216}
\usepackage{amssymb,mathtools,bm}
\usepackage{booktabs,array}
\usepackage{microtype}
\hypersetup{pdftitle={Phase-type representations and sharp bounds for SSP Runge--Kutta methods}}

\numberwithin{equation}{section}
\numberwithin{theorem}{section}
\newsiamremark{example}{Example}
\newsiamremark{remark}{Remark}

\newcommand{\E}{\mathbb{E}}
\newcommand{\Prob}{\mathbb{P}}
\newcommand{\Var}{\operatorname{Var}}
\newcommand{\Exp}{\operatorname{Exp}}

\newcommand{\PH}{\operatorname{PH}}
\newcommand{\Er}{\operatorname{Er}}
\newcommand{\one}{\mathbf e}
\newcommand{\trans}{\mathsf T}
\newcommand{\phT}{\mathbf T}
\newcommand{\phPi}{\boldsymbol\pi}
\newcommand{\phW}{\mathbf w}
\newcommand{\phExit}{\mathbf t}
\newcommand{\Generator}{\boldsymbol\Lambda}
\newcommand{\Green}{\mathbf U}
\newcommand{\zero}{\mathbf 0}
\newcommand{\ind}{\mathbf 1}
\headers{Phase-type representations and sharp bounds}{Z. Weng, H. Yu, and Q. Zhu}
\title
{Phase-type representations
for Strong Stability Preserving Runge--Kutta methods}
\author{Zhanyang Weng\thanks{Key Laboratory of Information Science of Electromagnetic Waves (MoE), Fudan University. (\email{zyweng26@m.fudan.edu.cn})}
\and Haijun Yu\thanks{Academy of Mathematics and Systems Science,
Chinese Academy of Sciences, Beijing 100190, China
(\email{hyu@lsec.cc.ac.cn}, \email{zhuquanfan@amss.ac.cn}).}
\and Quanfan Zhu\footnotemark[2]}
\date{}

\begin{document}
\maketitle
\begin{abstract}
We develop a phase-type (PH) representation of strong stability preserving (SSP) Runge--Kutta (RK)  methods. This representation provides a probabilistic interpretation of the RK coefficients and translates the chain order conditions into absorption-time moment identities. Using moment ratio bounds for PH distributions, we prove the conjectured optimal SSP coefficients for general implicit RK methods of orders two and three. The equality characterizations also establish uniqueness of the optimal RK tableaux.
\end{abstract}
\begin{keywords}
Strong stability preservation, Runge--Kutta methods, Phase-type distributions,
Moment ratio bounds,  Radius of absolute monotonicity.
\end{keywords}
\begin{MSCcodes}
65L06, 65L20, 60J27, 60E15
\end{MSCcodes}

\section{Introduction}
\label{sec:introduction}
Strong stability preserving (SSP) Runge--Kutta (RK)  methods preserve stability properties satisfied by the forward Euler method with respect to a given norm, seminorm, or convex functional, under a suitable time-step restriction. 
For overviews of  SSP methods, see \cite{gottlieb2001strong, gottlieb2009highorder, gottlieb2011sspbook, izzo2022ssp}.
The SSP coefficient measures the allowable time-step size relative to that of forward Eule. A central problem  in the development of SSP methods 
is to construct schemes with the largest possible SSP coefficient. 

The Shu--Osher representation \cite{shu1988tvd} expresses the explicit RK scheme as convex combinations of the previous solution and forward Euler steps. This approach was later extended to general RK methods in \cite{ferracina2005extension}, leading to the generalized Shu--Osher representation. 
The SSP property is closely related to the absolute-monotonicity theory developed by Kraaijevanger \cite{kraaijevanger1991contractivity}. 
For an irreducible Runge--Kutta method, the optimal SSP coefficient equals the radius of absolute monotonicity of its Butcher tableau \cite{ferracina2004stepsize,gottlieb2009highorder}. This connection makes the absolute-monotonicity conditions a useful tool for studying  SSP property.

The optimality of several classical explicit SSP Runge--Kutta methods has been established in \cite{gottlieb1998tvd, spiteri2002optimal}. For implicit methods, Ferracina and Spijker \cite{ferracina2008strong} studied singly diagonally implicit Runge--Kutta (SDIRK) methods, while Ketcheson et al.\ \cite{ketcheson2009optimal} investigated the larger class of general implicit Runge--Kutta methods. Let $C_{s,p}$ denote the optimal SSP coefficient over $s$-stage $p$-order Runge--Kutta methods. These studies led to the following conjectures for the optimal SSP coefficients at orders two and three:
\begin{equation}
 C_{s,2}=2s\quad(s\geq1),\qquad
 C_{s,3}=s-1+\sqrt{s^2-1}\quad(s\geq2).
 \label{eq:mainradii}
\end{equation}
In \cite{kocsis2014optimal}, the authors determined the optimal coefficient for second-order diagonally implicit Runge--Kutta (DIRK) methods. 
Prior to the present work, optimality over general implicit tableaux remained open at both orders.

Probability  also provides tools for studying SSP optimization. 
Ait-Haddou \cite{aithaddou2021computation} related the optimization of explicit linear SSP methods to moment-matching for the Poisson distribution. 

Inspired by this probabilistic approach and the algebraic transformation in \cite{kocsis2014optimal}, we explore a connection between SSP methods and phase-type (PH) distributions, which describe the absorption time of  finite-state continuous-time Markov chains (CTMCs) \cite{bladt2017matrix}.   Under the absolute-monotonicity conditions, every SSP Runge--Kutta tableau defines a PH representation. This representation also gives a probabilistic interpretation of the RK coefficients and translates the chain order conditions into moment equalities on the absorption time.

The classical minimum-variance theorem \cite{aldous1987least} gives a sharp lower bound on the second-order moment ratio of a PH distribution. We extend the  third-order moment ratio bound for acyclic PH distributions \cite{osogami2006closed} to general PH representations. We also establish shch moment bounds of arbitrary order for acyclic PH distributions.

Combining these moment inequalities with the RK--PH correspondence, we prove the conjectures \eqref{eq:mainradii} for general implicit tableaux and determine the uniqueness of the optimal tableaux up to simultaneous stage permutations at both orders.

The remainder of this paper is organized as follows. Section~\ref{sec:preliminaries} introduces SSP Runge--Kutta methods and PH distributions. Section~\ref{sec:transform} develops the RK--PH correspondence. Section~\ref{sec:variance} establishes moment ratio bounds for PH distributions. 
Section~\ref{sec:applications} applies these bounds to prove the optimal SSP coefficients and characterize the attaining tableaux. Section~\ref{sec:conclusion} concludes the paper.

\section{Preliminaries}
\label{sec:preliminaries}

\subsection{Runge--Kutta methods and order conditions}
\label{sec:rkbackground}
For an autonomous differential equation $u'=f(u)$,
an $s$-stage Runge--Kutta method with step size $h$ is
\begin{equation}
\begin{aligned}
 U_i&=u_n+h\sum_{j=1}^s a_{ij}f(U_j),\qquad i=1,\ldots,s,\\
 u_{n+1}&=u_n+h\sum_{i=1}^s b_i f(U_i).
\end{aligned}
\label{eq:rkdefinition}
\end{equation}
Its Butcher tableau is specified by $A=(a_{ij})\in\mathbb R^{s\times s}$,
$b\in\mathbb R^s$, and $c=A\one$, where $\one=(1,\ldots,1)^\trans$ is the all-ones column vector.

If $A$ is strictly lower triangular,
that is $a_{ij}=0$ whenever $j\geq i$, then \eqref{eq:rkdefinition}
is called an explicit Runge--Kutta method. Otherwise it is called an
implicit Runge--Kutta method. In particular, if $A$ is lower triangular
with at least one nonzero diagonal entry, the method is called a diagonally
implicit Runge--Kutta (DIRK) method.

Classical order at least $p$ means that, starting from an exact value,
the one-step error is $O(h^{p+1})$ for every sufficiently smooth
equation.  

The first-order condition $b^\trans\one=1$ is also called the consistency condition. The chain order condition is
\begin{equation}
 b^\trans A^{j-1}\one=\frac1{j!},\qquad j=1,2,3,\ldots
\label{eq:rkchainconditions}
\end{equation}
 Here \(\one\) denotes the \(s\times1\) vector of ones. For $p\geq3$, the chain conditions are only a subset of the full order conditions.

For a rooted tree $\mathfrak t=[\mathfrak t_1,\ldots,\mathfrak t_q]$,
let $|\mathfrak t|$ denote its number of vertices. Define the elementary
stage-weight vector $F_{\mathfrak t}$ and the tree factorial
$\gamma(\mathfrak t)$ recursively by
$F_\bullet=\one$, $\gamma(\bullet)=1$, and
\[
 F_{\mathfrak t}=(AF_{\mathfrak t_1})\cdot\ldots\cdot(AF_{\mathfrak t_q}),
 \qquad
 \gamma(\mathfrak t)=|\mathfrak t|\prod_{\ell=1}^q\gamma(\mathfrak t_\ell).
\]
Here $\bullet$ is the single-vertex tree and the bracket joins the
subtrees to a new root. The classical order conditions
\cite{butcher2016numerical} are equivalently
\begin{equation}
 b^\trans F_{\mathfrak t}=\frac{1}{\gamma(\mathfrak t)},
 \qquad |\mathfrak t|\leq p.
\label{eq:rkorderconditions}
\end{equation}

For $p\leq4$, Table~\ref{tab:orderconditions} lists the necessary
and sufficient algebraic conditions
\cite{butcher2016numerical}.

\begin{table}[htbp]
\centering
\caption{Classical order conditions for Runge--Kutta methods up to order four.}
\label{tab:orderconditions}
\renewcommand{\arraystretch}{1.4}
\begin{tabular}{@{}cl@{}}
\toprule
Order & Additional conditions \\
\midrule
1 & $b^\trans\one=1$ \\
2 & $b^\trans c=\dfrac12$ \\
3 & $b^\trans(c\cdot c)=\dfrac13,\qquad b^\trans Ac=\dfrac16$ \\[1.1ex]
4 & \begin{tabular}{@{}l@{}}
    $b^\trans(c\cdot c\cdot c)=\dfrac14,\qquad b^\trans(c\cdot Ac)=\dfrac18$ \\[1ex]
    $b^\trans A(c\cdot c)=\dfrac1{12},\qquad b^\trans A^2c=\dfrac1{24}$
    \end{tabular} \\
\bottomrule
\end{tabular}
\end{table}

\subsection{SSP-RK methods}
\label{sec:sspbackground}
An SSP-RK method preserves any norm or
convex functional $\|\cdot\|$ that is nonincreasing under forward Euler for a
suitable step-size restriction. Suppose that the forward Euler method satisfies
\begin{equation}
 \|u+hf(u)\|\leq\|u\|,\qquad 0\leq h\leq h_{\mathrm{FE}},
\label{eq:FEbound}
\end{equation}
for every $u$ and some $h_{\mathrm{FE}}>0$. Then an RK method is  strong
stability preserving if
\begin{equation}
 \|U_i\|\leq\|u_n\|,\qquad
 \|u_{n+1}\|\leq\|u_n\|
\label{eq:SSPbound}
\end{equation}
hold for some constant $C>0$ and all $0\leq h\leq C h_{\mathrm{FE}}$. The 
largest admissible $C$ is the SSP coefficient.

This property is established by rewriting \eqref{eq:rkdefinition} in the
Shu--Osher form \cite{shuosher1988efficient,gottlieb2001strong}:
\begin{equation}
\begin{aligned}
U_i&=\alpha_{i0}u_n+\sum_{j=1}^s\alpha_{ij}\left(U_j+\frac{h}{r}f(U_j)\right),\qquad i=1,\ldots,s,\\
 u_{n+1}&=\gamma_0 u_n+\sum_{i=1}^s\gamma_i\left(U_i+\frac{h}{r}f(U_i)\right),
\end{aligned}
\label{eq:shuosher}
\end{equation}
where all coefficients are nonnegative and
$$\alpha_{i0}+\sum_{j=1}^s\alpha_{ij}=1,\qquad\gamma_0+\sum_{i=1}^s\gamma_i=1.$$
Since each equation is a convex combination of $u_n$ and forward Euler steps,
\eqref{eq:FEbound} implies \eqref{eq:SSPbound} whenever $0\leq h\leq r\,h_{\mathrm{FE}}$.

In \cite{kraaijevanger1991contractivity}, Kraaijevanger studied
the absolute monotonicity of Runge--Kutta methods.
A Runge--Kutta method with Butcher tableau $(A,b)$ is called
absolutely monotone at $-r(r>0)$ if and only if
\begin{equation}
\begin{aligned}
 &(I+rA)^{-1}\ \text{exists},\\
 &A(I+rA)^{-1}\geq\zero,\qquad
 rA(I+rA)^{-1}\one\leq\one,\\
 &b^\trans(I+rA)^{-1}\geq\zero,\qquad
 rb^\trans(I+rA)^{-1}\one\leq1.
\end{aligned}
\label{eq:K}
\end{equation}
Here, the inequalities are understood component-wise.

Let $(K)_r$ denote the conditions \eqref{eq:K}.
We call $r$ feasible if $(K)_r$ holds. The radius of absolute monotonicity of the complete tableau is defined by
\begin{equation}
 R(A,b)=\sup\bigl(\{0\}\cup\{r>0:(K)_r\text{ holds}\}\bigr).
\label{eq:RKradius}
\end{equation}

According to Ketcheson et al.\
\cite{ketcheson2009optimal}, the SSP coefficient
of an irreducible Runge--Kutta method is equal to its radius of absolute monotonicity.
Therefore, the SSP property of such a method can be studied through the algebraic conditions $(K)_r$.
Under the forward Euler assumption \eqref{eq:FEbound}, the Runge--Kutta
 method satisfies the SSP property \eqref{eq:SSPbound} whenever
\begin{equation}
 h\leq R(A,b)\,h_{\mathrm{FE}}.
\label{eq:SSPstepsize}
\end{equation}

\flushbottom
\subsection{Markov chains and phase-type distributions}
\label{sec:probbackground}

A phase-type distribution describes the time to absorption in a finite-state continuous-time Markov chain.

\begin{definition}
\label{def:ctmc}
Let $(X_t)_{t\geq0}$ be a stochastic process with right-continuous
sample paths on the finite state space $S=\{1,\ldots,N\}$.
Write $\mathcal F_t=\sigma(X_u:0\leq u\leq t)$ for its history
up to time $t$. The process is called a continuous-time Markov
chain (CTMC) if it satisfies the Markov property
\begin{equation}
 \Prob(X_{t+h}=j\mid\mathcal F_t)
 =\Prob(X_{t+h}=j\mid X_t),
 \qquad t,h\geq0,\quad j\in S.
\label{eq:markovproperty}
\end{equation}
The chain is called time-homogeneous if its transition probabilities
depend only on the elapsed time $h$. For any $t,h\geq0$ and $i,j\in S$, define
\begin{equation}
 p_{ij}(h)=\Prob(X_{t+h}=j\mid X_t=i)
 =\Prob(X_h=j\mid X_0=i).
\label{eq:timehomogeneity}
\end{equation}
\end{definition}
For a time-homogeneous CTMC, the transition matrices
$\mathbf P(t)=(p_{ij}(t))$ satisfy
$\mathbf P(0)=I$ and $\mathbf P(t+h)=\mathbf P(t)\mathbf P(h)$.
There exists an intensity matrix $\Generator=(\lambda_{ij})$ satisfying 
\begin{equation}
\lambda_{ij}\geq0\ (i\ne j),\qquad\lambda_{ii}=-\sum_{j\ne i}\lambda_{ij}
\label{eq:intensitymatrix}
\end{equation}
such that $\mathbf P(t)=e^{t\Generator}.$
For $i\ne j$, $\lambda_{ij}$ is the transition rate from $i$ to $j$.
The departure rate from state $i$ is $\lambda_i=-\lambda_{ii}$.
If $\lambda_i>0$, the holding time is $\Exp(\lambda_i)$.
If $\lambda_i=0$, state $i$ is called
\emph{absorbing}, meaning that once entered it is never left. Throughout the paper, we denote the absorbing state by $\Omega$.

Now we consider an absorbing CTMC on $\{1,\ldots,s,\Omega\}.$ Let the chain start from an arbitrary state in ${1,\ldots,s}$. We assume that it reaches the absorbing state in finite time with probability one.
Under this assumption, the total time spent in each of the states
$1,\ldots,s$ is finite almost surely. These states are called
\emph{transient states}.

The generator of the absorbing chain has the block form
\begin{equation}
 \Generator=\begin{pmatrix}\phT&\phExit\\\zero&0\end{pmatrix},
\label{eq:PHgenerator}
\end{equation}
where $\phT=(t_{ij})\in\mathbb R^{s\times s}$ describes the jumps
among the transient states and $\phExit$ gives the transition rates
to the absorbing state. Since the rows of $\Generator$ sum to zero,
$\phExit=-\phT\one$. 

\begin{definition}[Phase-type distribution]
\label{sec:phbackground}
Let $(X_t)_{t\geq0}$ be the absorbing CTMC  with
initial distribution $\phPi=(\pi_1,\ldots,\pi_s)$ on
$\{1,\ldots,s\}$. The absorption time
\[
 \tau=\inf\{t\geq0:X_t=\Omega\}
\]
has a \emph{phase-type distribution} with representation
$(\phPi,\phT)$, written $\tau\sim\PH(\phPi,\phT)$.
\end{definition}

According to \cite[Theorem~3.1.11]{bladt2017matrix},
a subintensity matrix $T$ is invertible if and only if
all states corresponding to $T$ are transient.
The following criterion  determines whether a given matrix
can serve as the transient generator in a PH representation.
\begin{proposition}
\label{prop:transience}
Let $\phT=(t_{ij})\in\mathbb R^{s\times s}$. Then $\phT$ can serve
as a transient subintensity matrix in a PH representation if and only if
the following conditions hold:

\begin{enumerate}
\renewcommand{\labelenumi}{\textup{(\arabic{enumi})}}
\item The off-diagonal entries of $\phT$ are nonnegative:
$t_{ij}\geq0\quad(i\ne j)$.
\item The row sums of $\phT$ are nonpositive:
$\phT\one\leq\zero$.
\item The matrix $\phT$ is nonsingular:
$\det\phT\ne0$.
\end{enumerate}
\end{proposition}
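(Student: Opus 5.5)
The plan is to prove the two directions separately, relying on the cited fact \cite[Theorem~3.1.11]{bladt2017matrix}: a subintensity matrix is invertible if and only if all of its states are transient.

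\emph{Necessity.} Suppose $\phT$ is the transient block of an absorbing generator $\Generator$ as in \eqref{eq:PHgenerator}.
\begin{itemize}
\item Condition (1) is immediate from \eqref{eq:intensitymatrix}, since off-diagonal intensities are nonnegative.
\item Condition (2) follows from $\phExit=-\phT\one$ together with $\phExit\geq\zero$, because $\phExit$ collects off-diagonal intensities into $\Omega$.
\item Condition (3) follows from the cited theorem: all states $1,\ldots,s$ are assumed transient, so $\phT$ is invertible.
\end{itemize}
If one prefers a self-contained argument for (3), use the Green matrix $\Green=\int_0^\infty e^{t\phT}\,dt$. Its entries are the expected occupation times, which are finite because absorption is almost sure in finite time and the state space is finite. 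The identity $e^{t\phT}\to 0$ then gives $\phT\Green=-I$.

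\emph{Sufficiency.} Given $\phT$ satisfying (1)--(3), I would build the matrix $\Generator$ of \eqref{eq:PHgenerator} with $\phExit:=-\phT\one$.
\begin{itemize}
\item By (2), $\phExit\geq\zero$.
\item By (1), the off-diagonal entries of $\Generator$ are nonnegative.
\item Every row of $\Generator$ sums to zero by construction.
\end{itemize}
So $\Generator$ is a valid intensity matrix, the state $\Omega$ is absorbing, and $\phT$ is a subintensity matrix. It remains to show that every state $1,\ldots,s$ is transient, i.e.\ that absorption occurs almost surely from every starting state. By the cited theorem this follows from (3).

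\emph{Direct argument for transience (the main step).} The hardest part is justifying this transience claim directly rather than quoting the theorem; here is how I would do it.
\begin{itemize}
\item Let $C\subseteq\{1,\ldots,s\}$ be a closed class of transient states that never reaches $\Omega$.
\item For $i\in C$, the chain cannot leave $C$, so $t_{ij}=0$ for $j\notin C$ and $\phExit_i=0$.
\item Hence the rows of $\phT$ indexed by $C$ sum to zero and are supported on $C$.
\item Then the vector $v$ defined by $v=\ind_C$ satisfies $(\phT v)_i=0$ for $i\in C$.
\item Restricted to $C$, the principal submatrix $\phT_{CC}$ has zero row sums, so $\phT_{CC}\ind=0$.
\item Because rows in $C$ vanish outside $C$, the rows of $\phT$ indexed by $C$ are linearly dependent. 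Concretely, $\phT_{CC}$ is singular and the block structure makes $\det\phT=\det\phT_{CC}\cdot\det(\text{rest})=0$ after permuting $C$ first. This contradicts (3).
\end{itemize}
Consequently every state can reach $\Omega$ with positive probability. Finiteness of the state space then gives a uniform positive probability of absorption within a fixed time, and a geometric-trials argument yields absorption with probability one.
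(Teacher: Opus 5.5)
Your proposal is correct and follows the same route as the paper, which gives no separate proof and rests the proposition on the cited Bladt--Nielsen result (a subintensity matrix is invertible if and only if all its states are transient), with conditions (1)--(2) making $\phT$ the transient block of a valid generator exactly as in your construction of $\Generator$ with $\phExit=-\phT\one$. Your Green-matrix argument for necessity of (3) and your closed-class/block-determinant argument for sufficiency are correct self-contained substitutes for that citation; the latter implicitly takes $C$ to be the set of states reachable from a state that cannot reach $\Omega$.
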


Usually,  the initial distribution satisfies $\phPi\one=1$.  In a few cases we also consider a
PH distribution with initial absorbing mass, which means
$\Prob(X_0=\Omega)=1-\phPi\one\geq0$. Hence, we define $\pi_0=1-\phPi\one$ and write
$\tau\sim\PH(\pi_0,\phPi,\phT)$. 
To distinguish between a fixed initial state $X_0=i$ and an
initial distribution $\phPi$, we write $\Prob_i,\E_i$ and
$\Prob_{\phPi},\E_{\phPi}$ for the corresponding probabilities and
expectations.

For $\tau\sim\PH(\phPi,\phT)$, the transient transition matrix is
$e^{u\phT}$, whose $(i,j)$ entry is
$\Prob_i(X_u=j,\tau>u)$. Summing over transient endpoints gives
\begin{equation}
 \Prob(\tau>u)=\phPi e^{u\phT}\one,\qquad
 F_\tau(u)=1-\phPi e^{u\phT}\one,\quad u\geq0.
\label{eq:PHcdf}
\end{equation}
Weighting these probabilities by the exit rates gives the density
\begin{equation}
 f_\tau(u)=\phPi e^{u\phT}\phExit,\qquad u>0.
\label{eq:PHdensity}
\end{equation}

For each transient state $j$, define the cumulative occupation time
\begin{equation}
 L_j=\int_0^\tau\ind_{\{X_t=j\}}\,dt,
 \qquad j=1,\ldots,s.
\label{eq:occupationtimes}
\end{equation}
Then $\tau=\sum_{j=1}^sL_j$. The \emph{Green matrix} collects the
expected occupation times:
\begin{equation}
 \Green=(-\phT)^{-1}=\int_0^\infty e^{u\phT}\,du,
 \qquad U_{ij}=\E_iL_j.
\label{eq:greenmatrix}
\end{equation}
For an initial distribution $\phPi$, these expectations form the row
$\phPi\Green$.
With reward rate $v_j\geq0$ in state $j$, the expected cumulative
reward is $\E_{\phPi}\sum_{j=1}^s v_jL_j=\phPi\Green v$,
where $v=(v_1,\ldots,v_s)^\trans$.

The absorption-time moments satisfy
\begin{equation}
 \E\tau^j=j!\phPi\Green^j\one,\qquad j\geq1.
\label{eq:PHmoments}
\end{equation}
In particular, $\E\tau=\phPi\Green\one$ and
$\Var(\tau)=2\phPi\Green^2\one-(\phPi\Green\one)^2$.

The Erlang law $\Er_s(\kappa)$ is the distribution of the sum of $s$ independent $\Exp(\kappa)$ variables.
It admits a sequential PH representation in which each transient state is visited exactly once before absorption.
To match the lower-triangular structure of the RK tableau, we label the transient states in decreasing order along the path.
In this case, the chain starts in state $s$ and follows
\[
 s\xrightarrow{\;\kappa\;}s-1\xrightarrow{\;\kappa\;}\cdots
 \xrightarrow{\;\kappa\;}1\xrightarrow{\;\kappa\;}\Omega.
\]
The initial distribution and the transient  matrix of the Erlang distribution are
\begin{equation}
 \phPi=(0,\ldots,0,1),
 \qquad
\phT=\kappa\begin{pmatrix}-1&&&\\1&-1&&\\&\ddots&\ddots&\\&&1&-1\end{pmatrix},
\label{eq:erlangexample}
\end{equation}
respectively. For $X\sim\Er_s(\kappa)$, the moments are given by
\begin{equation}
\E[X^j]=\frac{s(s+1)\cdots(s+j-1)}{\kappa^j},
\qquad j\geq1.
\label{eq:erlangmoments}
\end{equation}

\section{The RK--PH correspondence}
\label{sec:transform}

\subsection{Algebraic representation}
\label{sec:RKrepresentation}
We construct a PH representation from a Runge--Kutta tableau satisfying \((K)_r\). The stage conditions yield a transient subintensity matrix, while the output conditions provide nonnegative weights for the initial distribution.
\begin{proposition}
\label{prop:RKPH}
Let $(A,b)$ be an $s$-stage Runge--Kutta tableau of order at least one. Suppose $(K)_r$ holds for some $r>0$. Define
\begin{equation}
\begin{aligned}
 \Green&=I+rA, & \phT&=-\Green^{-1},\\
 \phW&=-rb^\trans\phT, & \sigma&=\phW\one.
\end{aligned}
\label{eq:transform}
\end{equation}
Then $\phT$ is a transient subintensity matrix. $\phW\geq\zero$ and $0<\sigma\leq1$ and \(\phPi=\phW/\sigma\)
is a probability row. We call \((\boldsymbol\pi,\mathbf T)\) the PH representation associated with \((A,b,r)\).
\end{proposition}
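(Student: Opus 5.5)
The plan is to check the three conditions of Proposition~\ref{prop:transience} for $\phT=-(I+rA)^{-1}$. Each one will be read off from a corresponding line of $(K)_r$ using the single identity
\[
 (I+rA)^{-1}=I-rA(I+rA)^{-1},
\]
which holds because $(I+rA)^{-1}+rA(I+rA)^{-1}=(I+rA)(I+rA)^{-1}=I$, using that $A$ commutes with $(I+rA)^{-1}$. We then treat $\phW$ and $\sigma$ by the analogous identity for the row $b^\trans(I+rA)^{-1}$.

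First, $(K)_r$ asserts that $\Green=I+rA$ is invertible, so $\phT=-\Green^{-1}$ exists and is nonsingular; this is condition (3). Set $P=rA(I+rA)^{-1}$. By $(K)_r$ we have $P\geq\zero$ entrywise and $P\one\leq\one$, and the identity above gives $\phT=P-I$.

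For condition (1), the off-diagonal entries of $\phT$ equal those of $P$, so they are nonnegative. For condition (2), we compute $\phT\one=P\one-\one\leq\zero$. Hence Proposition~\ref{prop:transience} applies and $\phT$ is a transient subintensity matrix. Equivalently, we are using the splitting $\phT=P-I$, which exhibits a uniformized chain with unit rate and substochastic jump matrix $P$.

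Next consider $\phW=-rb^\trans\phT=rb^\trans(I+rA)^{-1}$. By $(K)_r$ this row is nonnegative and satisfies $\sigma=\phW\one=rb^\trans(I+rA)^{-1}\one\leq1$. The only step needing an extra argument is strict positivity $\sigma>0$, and here we use order one, $b^\trans\one=1$.

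Suppose instead that $\sigma=0$. Since $\phW\geq\zero$, this forces $\phW=\zero$, i.e.\ $b^\trans(I+rA)^{-1}=\zero$. Multiplying on the right by $I+rA$ gives $b^\trans=\zero$, which contradicts $b^\trans\one=1$. Therefore $0<\sigma\leq1$, and $\phPi=\phW/\sigma$ is a nonnegative row with $\phPi\one=1$, i.e.\ a probability row.

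All the steps are routine; the one place where more than $(K)_r$ is used is the positivity of $\sigma$, which is exactly where the order-one hypothesis enters.
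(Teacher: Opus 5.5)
Your proposal is correct and follows essentially the same route as the paper: the paper also uses $I+\phT=rA(I+rA)^{-1}$ to read off conditions (1)--(3) of Proposition~\ref{prop:transience}, and it gets $\sigma>0$ from consistency via $\phW\Green\one=rb^\trans\one=r>0$, which is the same argument as your contradiction $\phW=\zero\Rightarrow b=\zero$. One small remark: the identity $(I+rA)^{-1}=I-rA(I+rA)^{-1}$ follows from distributivity alone, so the appeal to commutation is not needed.
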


\begin{proof}
The first stage inequality in \eqref{eq:K} gives
\[
 I+\phT=rA(I+rA)^{-1}\geq0,
\]
so the off-diagonal entries of $\phT$ are nonnegative.
The second stage inequality gives
\[
 \phT\one=rA(I+rA)^{-1}\one-\one\leq\zero,
\]
so its row sums are nonpositive. Moreover,
$\phT=-(I+rA)^{-1}$ is nonsingular. According to Proposition~\ref{prop:transience}, $\phT$ is a valid transient subintensity matrix for a PH representation.

The output conditions in \eqref{eq:K} give $\phW\geq\zero$ and
$0\leq\sigma\leq1$. Consistency implies
$\phW\Green\one=rb^\trans\one=r>0$, so $\sigma>0$.
Thus $\phPi=\phW/\sigma$ is a probability row on the transient states.
\end{proof}

The transformation in \eqref{eq:transform} preserves the algebraic structure of the RK tableau and recasts the constraints in \((K)_r\) as admissibility conditions for the associated PH representation. Thus we can study SSP properties  without directly manipulating the numerous inequalities. Moreover,  the matrix $\phT$ and the vector $\bm w$  arise naturally from the Shu–Osher representation of the RK method. Collect the stages into \(U=(U_1,\ldots,U_s)^\top\) and write \(f(U)=(f(U_1),\ldots,f(U_s))^\top\). The RK equations can be rewritten as
\begin{equation}
\begin{aligned}
U&=\phExit u_n+(I+\phT)(U+(h/r)f(U)),\\
u_{n+1}&=(1-\sigma)u_n+\phW(U+(h/r)f(U)).
\end{aligned}
\end{equation}
Here \(I+\phT\) and \(\phExit\) give the convex-combination weights in the stage equations, while \(\phW\) and \(1-\sigma\) give the convex-combination weights in the output equation.

The same RK tableau also gives a PH distribution with initial absorbing mass, denoted by \(\PH(1-\sigma,\phW,\phT)\).  Conditioning on an initial transient state yields the normalized representation \(\mathrm{PH}(\phPi,\phT)\), where \(\phPi=\phW/\sigma\). Unless otherwise specified, we adopt the latter representation throughout this paper.

\subsection{Probabilistic interpretation of RK coefficients}
\label{sec:coefficients}
Uniformization represents a finite-state CTMC as a discrete-time
Markov chain driven by an independent Poisson clock. For a clock rate
$\nu\geq\max\lambda_i$,  the
transition probability matrix is $I+\Generator/\nu$ and 
\begin{equation}
\mathbf P(t)
=e^{t\Generator}
=e^{-\nu t}\sum_{n=0}^{\infty}
\frac{(\nu t)^n}{n!}
\left(I+\frac{\Generator}{\nu}\right)^n.
\label{eq:uniformization}
\end{equation}
For more details, see \cite[Section 1.3.4]{bladt2017matrix} and \cite{gross1984randomization, vandijk2018uniformization}.

Under the assumptions of Proposition~\ref{prop:RKPH},
$0<-t_{ii}\leq1$. We choose a clock of rate $\nu=1$ with transient
transition matrix
\[
 \mathbf P=I+\phT.
\]
At each clock event, a process in state $i$ moves to transient
state $j$ with probability $P_{ij}$ and is absorbed with probability
$1-(\mathbf P\one)_i$.

From the definitions of the transition probability matrix
and the Green matrix $\Green$ in \eqref{eq:greenmatrix}, we obtain
\(\Green=I+rA=I+\mathbf P\Green,\)
and hence
\[
 rA=\mathbf P\Green.
\]
Under the rate-one clock representation, the unit matrix $I$ accounts for the expected
occupation before the first clock event. Thus $rA$ is the expected
occupation matrix after the first clock event.

Since $U_{ij}=\E_i L_j$ and $c=A\one$, we have
\begin{align}
 ra_{ij}&=\E_i L_j-\delta_{ij},
 \label{eq:acooccupation}\\
 rc_i&=\E_i\tau-1,
 \label{eq:nodeoccupation}
\end{align}
For a chain starting from state $i$, $ra_{ij}$ is the expected time spent in state $j$ after the first clock event
while $rc_i$ is the expected time from that event to absorption.

With initial distribution $\phPi$, \eqref{eq:greenmatrix} and
Proposition~\ref{prop:RKPH} give
$\E_{\phPi}L_i=(\phPi\Green)_i=(r/\sigma)b_i$.
Since $b^\trans\one=1$, we have $\E_{\phPi}\tau=r/\sigma$ and
\begin{equation}
 b_i=\frac{\E_{\phPi}L_i}{\E_{\phPi}\tau}
     =\frac{\E_{\phPi}L_i}{\sum_{j=1}^s\E_{\phPi}L_j}.
\label{eq:weightoccupation}
\end{equation}
Thus $b_i$ is the normalized expected occupation time in state $i$.

\subsection{Moment identities and order conditions}
\label{sec:momentdictionary}
\begin{proposition}
\label{prop:rkmoments}
Under the assumptions of Proposition~\ref{prop:RKPH}, for $j\geq1$,
\begin{equation}
 \E\tau^j=j!\phPi\Green^j\one
 =\frac{j!r}{\sigma}b^\trans(I+rA)^{j-1}\one.
\label{eq:momentdictionary}
\end{equation}
If the tableau satisfies the chain order conditions
\eqref{eq:rkchainconditions} through order $p$, then
\begin{equation}
 \E\tau^j=\frac{j!r}{\sigma}
       \sum_{k=0}^{j-1}\binom{j-1}{k}\frac{r^k}{(k+1)!},
 \qquad 1\leq j\leq p.
\label{eq:generalrkmoments}
\end{equation}
\end{proposition}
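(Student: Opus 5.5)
The plan is to combine the general PH moment formula \eqref{eq:PHmoments} with the definitions in \eqref{eq:transform}, and then expand a binomial under the chain conditions.

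\emph{First identity.} By \eqref{eq:PHmoments} applied to $\tau\sim\PH(\phPi,\phT)$ with $\Green=(-\phT)^{-1}=I+rA$, we have $\E\tau^j=j!\,\phPi\Green^j\one$. This formula is valid because Proposition~\ref{prop:RKPH} guarantees that $\phT$ is a transient subintensity matrix and that $\phPi$ is a probability row.

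Next substitute $\phPi=\phW/\sigma$ and $\phW=-rb^\trans\phT=rb^\trans\Green^{-1}$. This gives
\[
\phPi\Green^j\one=\frac{r}{\sigma}\,b^\trans\Green^{-1}\Green^{j}\one=\frac{r}{\sigma}\,b^\trans(I+rA)^{j-1}\one,
\]
which is \eqref{eq:momentdictionary}.

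\emph{Second identity.} Since $I$ and $rA$ commute, the binomial theorem gives
\[
(I+rA)^{j-1}=\sum_{k=0}^{j-1}\binom{j-1}{k}r^kA^k .
\]
Applying $b^\trans(\cdot)\one$ term by term, the chain conditions \eqref{eq:rkchainconditions} give $b^\trans A^k\one=1/(k+1)!$ for each index with $k+1\le p$. For $j\le p$, every $k$ in the sum satisfies $k\le j-1\le p-1$, so all needed chain conditions are available. Substituting these values yields \eqref{eq:generalrkmoments}.

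\emph{Main obstacle.} There is essentially no obstacle beyond bookkeeping. The only points needing care are that the index shift $\Green^{-1}\Green^j=\Green^{j-1}$ is valid for $j\ge1$, and that the chain conditions are used only up to $A^{p-1}$, which is exactly what ``through order $p$'' provides.
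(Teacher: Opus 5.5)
Your proposal is correct and matches the paper's proof essentially step for step. You substitute $\phPi=\phW/\sigma$ and $\phW=rb^\trans\Green^{-1}$ (the paper writes this as $\phW\Green=rb^\trans$) into the PH moment formula. You then expand $(I+rA)^{j-1}$ binomially and apply the chain conditions for $k\le j-1\le p-1$.
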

\begin{proof}
The identities $\phPi=\phW/\sigma$ and
$\phW\Green=rb^\trans$ give
\[
 \E\tau^j=j!\phPi\Green^j\one
 =\frac{j!r}{\sigma}b^\trans\Green^{j-1}\one,
\]
which is \eqref{eq:momentdictionary}.
Expanding the matrix power and applying the chain conditions gives
\[
\begin{aligned}
 b^\trans(I+rA)^{j-1}\one
 &=\sum_{k=0}^{j-1}\binom{j-1}{k}r^k b^\trans A^k\one\\
 &=\sum_{k=0}^{j-1}\binom{j-1}{k}\frac{r^k}{(k+1)!},
 \qquad 1\leq j\leq p.
\end{aligned}
\]
Substitution yields \eqref{eq:generalrkmoments}.
\end{proof}

In particular, when the corresponding chain conditions hold,
\begin{equation}
 \E\tau=\frac r\sigma,\qquad
 \E\tau^2=\frac{r^2+2r}{\sigma},\qquad
 \E\tau^3=\frac{r^3+6r^2+6r}{\sigma}.
\label{eq:rkmomentsfour}
\end{equation}

This use of moment matching is analogous to the description of
optimal linear SSP polynomials in \cite{aithaddou2021computation}.
There the matching measure has bounded integer support and matches
Poisson moments; here the chain conditions prescribe moments of
a PH absorption time represented with $s$ transient states.

For $p\geq3$, the classical order conditions also include branching
conditions such as $b^\trans(c\cdot c)=1/3$. The full classical order conditions \eqref{eq:rkorderconditions} admit an
analogous occupation-reward expression. By \eqref{eq:weightoccupation},
they are equivalently
\begin{equation}
 \frac{\E_{\phPi}\displaystyle\int_0^\tau
             (F_{\mathfrak t})_{X_t}\,dt}
      {\E_{\phPi}\tau}
 =b^\trans F_{\mathfrak t}
 =\frac{1}{\gamma(\mathfrak t)},
 \qquad |\mathfrak t|\leq p.
\label{eq:treeoccupation}
\end{equation}
The first equality is an identity, with $(F_{\mathfrak t})_i$ as the
reward rate in state $i$; the second imposes the order condition.
This reformulates the standard tree conditions using state-dependent
rewards, rather than absorption-time moments alone.

\subsection{Three-stage examples}
\label{sec:rkexamples}
The following three-stage examples illustrate the PH representation and
the probabilistic interpretation of the RK coefficients. 
In both examples, the chain starts in state $3$.

\begin{example}[SSP(3,3) method]
\label{ex:ssp33PH}
Consider the classical three-stage, third-order explicit SSP method:
\[
\begin{aligned}
U_1&=u_n+h f(u_n),\\
U_2&=\frac34u_n+\frac14\bigl(U_1+h f(U_1)\bigr),\\
u_{n+1}&=\frac13u_n+\frac23\bigl(U_2+h f(U_2)\bigr).
\end{aligned}
\]
Its Butcher tableau is
\[
 A=
 \begin{pmatrix}
 0&0&0\\
 1&0&0\\
 \frac14&\frac14&0
 \end{pmatrix},
 \qquad
 b=\left(\frac16,\frac16,\frac23\right)^\trans,
 \qquad
 c=\left(0,1,\frac12\right)^\trans.
\]
At the radius of absolute monotonicity $r=1$, the transformation \eqref{eq:transform} gives
\[
 \Green=I+A=
 \begin{pmatrix}
 1&0&0\\
 1&1&0\\
 \frac14&\frac14&1
 \end{pmatrix},
 \qquad
 \phT=-\Green^{-1}=
 \begin{pmatrix}
 -1&0&0\\
 1&-1&0\\
 0&\frac14&-1
 \end{pmatrix},
\]
and
\[
 \phW=-b^\trans\phT=\left(0,0,\frac23\right),
 \qquad
 \sigma=\frac23,
 \qquad
 \phPi=\phW/\sigma=(0,0,1).
\]
The absorption paths are
\[
 \begin{array}{@{}c@{\;}l@{}}
 \overset{\scriptstyle 3/4}{
   \makebox[\widthof{$3\xrightarrow{\;1/4\;}2\xrightarrow{\;\;1\;\;}1\xrightarrow{\;\;1\;\;}4$}]{\kern0.25em
     \vrule width 0.4pt height 1.61ex depth 0.45ex
     \leaders\hrule height 1.61ex depth -1.52ex\hfill
     \kern-0.25em$\downarrow$}} & \\[0.5mm]
 3\xrightarrow{\;1/4\;}2\xrightarrow{\;\;1\;\;}1\xrightarrow{\;\;1\;\;}\Omega
 \end{array}
\]
where the labels on the arrows denote transition rates. State $3$ has total departure rate $3/4+1/4=1$. After an $\Exp(1)$ holding time, the process is absorbed directly with probability $3/4$ or jumps to state $2$ with probability $1/4$ and then follows $2\to1\to\Omega$ with independent $\Exp(1)$ holding times. The absorption time $\tau_1$ is therefore a mixture of Erlang distributions:
\[
 \tau_1 \sim \frac34\,\Er_1(1)+\frac14\,\Er_3(1).
\]

\end{example}

\begin{example}[Second-order implicit method]
\label{ex:midpointPH}
Consider the three-stage, second-order SDIRK method formed by three equal implicit-midpoint substeps:
\[
\begin{aligned}
U_1&=u_n+\frac h6f(U_1),\\
U_2&=U_1+\frac h6\left(f(U_1)+f(U_2)\right),\\
U_3&=U_2+\frac h6\left(f(U_2)+f(U_3)\right),\\
u_{n+1}&=U_3+\frac h6f(U_3).
\end{aligned}
\]
Its Butcher tableau is
\[
 A=
 \begin{pmatrix}
 \frac16&0&0\\[2pt]
 \frac13&\frac16&0\\[2pt]
 \frac13&\frac13&\frac16
 \end{pmatrix},
 \qquad
 b=\left(\frac13,\frac13,\frac13\right)^\trans,
 \qquad
 c=\left(\frac16,\frac12,\frac56\right)^\trans.
\]
At $r=6$, the transformation \eqref{eq:transform} gives
\[
 \Green=I+6A=2
 \begin{pmatrix}
 1&0&0\\
 1&1&0\\
 1&1&1
 \end{pmatrix},
 \qquad
 \phT=-\Green^{-1}=\frac12
 \begin{pmatrix}
 -1&0&0\\
 1&-1&0\\
 0&1&-1
 \end{pmatrix},
\]
and $\phW=\phPi=(0,0,1)$ with $\sigma=1$. The absorption path is
\[
 3\xrightarrow{\;1/2\;}
 2\xrightarrow{\;1/2\;}
 1\xrightarrow{\;1/2\;}
 \Omega.
\]
Each transient state is visited once with an independent $\Exp(1/2)$ holding time, so the absorption time $\tau_2$ satisfies
\[
 \tau_2\sim\Er_3(1/2).
\]

\end{example}

\section{Moment ratio bounds for phase-type distributions}
\label{sec:variance}
\subsection{Moment ratios and the minimum-variance bound}
\label{sec:momentratios}
For a nonnegative random variable $\tau$ with positive mean, suppose its moments are finite. Then we set $\mu_j=\E\tau^j$ and $\mu_0=1$.
The adjacent moment ratios are
\begin{equation}
 H_k(\tau)=\frac{\mu_{k-1}\mu_{k+1}}{\mu_k^2},
\label{eq:momentratios}
\end{equation}
where $k$ is a positive integer.
In particular,
\begin{equation}
 H_1(\tau)=\frac{\mu_2}{\mu_1^2}=1+\frac{\Var(\tau)}{(\E\tau)^2}, \qquad H_2(\tau)=\frac{\mu_1\mu_3}{\mu_2^2}.
\label{eq:firsttworatios}
\end{equation}
Thus $H_1$ measures the variance relative to the squared mean,
while $H_2$ compares the first three moments.
Cauchy--Schwarz gives $H_k\geq1$ for any nonnegative variable with finite moments.
For PH distributions, the number of transient states may yield stronger lower bounds on the moment ratios. This is exactly what we are concerned with in this section.

For the Erlang distribution, the moment formula
\eqref{eq:erlangmoments} gives
\begin{equation}
 H_k(\Er_s(\kappa))=\frac{s+k}{s+k-1},\qquad k\geq1.
\label{eq:erlangratios}
\end{equation}
This value is independent of the rate $\kappa$. Hence, a natural conjecture about moment ratio bounds is that 
\begin{equation}
 H_k(\tau)\ge\frac{s+k}{s+k-1},\qquad k\geq1.
\label{eq:ratiosbounds}
\end{equation}
For $k=1$, it is  proved by the following classical minimum-variance theorem.

\begin{theorem}[Aldous--Shepp {\cite{aldous1987least}}]
\label{thm:aldousfixed}
Let $(X_t)_{t\geq0}$ be a continuous-time Markov chain with
$s$ transient states and one absorbing state, started from a
fixed transient state. Let $\tau$ be its absorption time. Then
\begin{equation}
 \frac{\Var(\tau)}{(\E\tau)^2}\geq\frac1s.
\label{eq:variancebound}
\end{equation}
Equality holds if and only if the chain is an Erlang process.
\end{theorem}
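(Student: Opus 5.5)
The strategy is to reduce the bound to a variational statement about the Green matrix $\Green=(-\phT)^{-1}$, using the moment formula \eqref{eq:PHmoments}. Started from a fixed state $i$, the first two moments are
\[
\E_i\tau=(\Green\one)_i,\qquad \E_i\tau^2=2(\Green^2\one)_i .
\]
Write $m=\Green\one$ for the vector of mean absorption times from each state. Then $\E_i\tau^2=2(\Green m)_i$, and $\Green m$ is the expected reward accumulated before absorption when the reward rate in state $j$ is $m_j$. Hence
\[
\E_i\tau^2=2\,\E_i\int_0^\tau m_{X_t}\,dt .
\]
The inequality $\Var(\tau)\ge(\E\tau)^2/s$ is therefore equivalent to
\[
2\,\E_i\int_0^\tau m_{X_t}\,dt\ \ge\ \Bigl(1+\tfrac1s\Bigr)m_i^2 .
\]

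The plan is induction on $s$ through a first-step decomposition, as in Aldous--Shepp. Let the chain leave state $i$ at rate $\lambda_i$ after an $\Exp(\lambda_i)$ holding time $H$. Then $\tau=H+\tau'$, where $\tau'$ is the absorption time after the first jump. The variable $\tau'$ has initial distribution $(t_{ij}/\lambda_i)_{j\ne i}$ and possibly positive initial absorbing mass.

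A direct induction is awkward, because after returning to $i$ the chain again has $s$ states. I would therefore first reduce to the acyclic case by a path argument. Condition on the sequence of states visited, the embedded jump chain. Given this sequence, $\tau$ is a sum of independent exponentials along the path. The squared coefficient of variation of such a sum is
\[
\frac{\sum_k \lambda_{x_k}^{-2}}{\bigl(\sum_k\lambda_{x_k}^{-1}\bigr)^2}.
\]
This does not directly give $1/s$ when the path revisits states. The reason is that repeated holding times at the same state have identical rates, so grouping by state does not control things.

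The cleaner route is the classical one: among fixed $s$, minimize $\E\tau^2$ subject to $\E\tau=1$ over subintensity matrices. For the minimizer, show via the optimality (KKT) conditions on the entries $t_{ij}$ that the minimizer is a sequential chain with equal rates. The key identity is that perturbing $t_{ij}$ changes $\E\tau$ by an amount proportional to $(\Green\one)_j-(\Green\one)_i$, and changes $\E\tau^2$ proportionally to a similar difference of the quantities $2(\Green m)_j - 2(\Green m)_i$. The chain can be forced to move only toward states with the appropriate ordering of $m$. This reduces to chains that jump along a total order of $m$, which are acyclic, each state visited at most once. For such a chain the path decomposition above works. Each realized path visits $k\le s$ distinct states, so $\Var(\tau\mid\text{path})\ge(\E[\tau\mid\text{path}])^2/k$ by Cauchy--Schwarz. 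Mixing over paths only increases variance by the law of total variance, and using $k\le s$ gives the bound.

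The main obstacle is this reduction step: proving that an extremal representation can be taken acyclic, with no cycles in its transitions, and that the infimum is attained (compactness after normalizing $\E\tau=1$ with unbounded rates is delicate). I would handle attainment by noting that states with huge rates contribute negligibly and can be merged, reducing $s$.

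For equality, every inequality used must be tight. Cauchy--Schwarz must be an equality on each path, so the holding rates along a path are equal. The law of total variance must be an equality, so there is a single path. And $k=s$, so every state is visited. This is exactly the Erlang chain.
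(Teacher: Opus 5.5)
Your acyclic step is correct. Conditioning on the jump-chain path gives independent exponential holding times, Cauchy--Schwarz gives $\Var(\tau\mid J)\geq(\E[\tau\mid J])^2/k$ with $k\leq s$, and the law of total variance finishes; this is the $k=1$ case of the paper's acyclic argument. The genuine gap is the reduction to the acyclic case. Cycles are exactly what make the theorem nontrivial, and you only assert the reduction. Your perturbation computation does not support it. For $\delta\phT=\epsilon\,e_i(e_j-e_i)^\trans$, to first order $\delta\Green=\Green\,\delta\phT\,\Green$. This gives $\delta\E_{x_0}\tau=\epsilon\,U_{x_0 i}(m_j-m_i)$, but $\tfrac12\delta\E_{x_0}\tau^2=\epsilon\bigl[U_{x_0 i}\bigl((\Green m)_j-(\Green m)_i\bigr)+(\Green^2)_{x_0 i}(m_j-m_i)\bigr]$, with a second term your sketch omits. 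The first variation of the scale-invariant ratio $\E\tau^2/(\E\tau)^2$ mixes these terms, and $m_j\geq m_i$ does not fix its sign. So nothing shows that a transition into a state with larger mean remaining time can be profitably removed at a minimizer. Two further pieces are also missing. First, you have not shown a minimizer exists: minimizing sequences can degenerate through exploding rates, fast cycling between states, or vanishing rates on rarely visited states, and second moments must pass to the limit. Second, the equality statement needs \emph{every} minimizer to be acyclic. Knowing only that some extremal representation can be taken acyclic does not exclude a cyclic chain attaining $1/s$.

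The paper does not reprove the theorem; it cites the Aldous--Shepp martingale argument, which works pathwise and handles revisits directly, with no optimization. Let $h(i)=\E_i\tau$ and $h(\Omega)=0$. Then $M_t=t\wedge\tau+h(X_{t\wedge\tau})$ is an $L^2$-bounded martingale with $M_0=h(x_0)$ and $M_\tau=\tau$. Its continuous part has finite variation, so $\Var(\tau)=\E\sum(\Delta h)^2$, the sum running over jumps before absorption. Along any path, $h$ takes values in $\{0\}$ together with at most $s$ positive levels. Each of the at most $s$ gaps between consecutive levels in $[0,h(x_0)]$ is spanned by the first jump that goes below the gap's upper endpoint. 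Assign each gap to such a jump and apply Cauchy--Schwarz over the at most $s$ jumps used. This gives $\sum(\Delta h)^2\geq h(x_0)^2/s$ on every path, with or without cycles. Equality forces $s$ distinct, equally spaced levels, with every jump descending exactly one level. The path is then deterministic, and $1/\lambda_i=h(i)-h(\text{next state})=h(x_0)/s$, which is the Erlang chain.
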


They proved this theorem by constructing a martingale that expresses
the absorption-time variance as the expected sum of squared changes
in the mean remaining absorption time along a sample path.
 This construction also identifies the Erlang process through the equality conditions. 
 For further details, see
\cite{aldous1987least}.

The theorem also gives the bound for a random initial 
state. Applying Theorem~\ref{thm:aldousfixed}
conditional on the initial state $X_0$ and using the law of total variance gives
\[
\begin{aligned}
 \Var(\tau)
 &\geq\frac1s\E\!\left[\bigl(\E[\tau\mid X_0]\bigr)^2\right]
   +\Var\!\left(\E[\tau\mid X_0]\right)\\
 &=\frac{(\E\tau)^2}{s}
   +\left(1+\frac1s\right)\Var\!\left(\E[\tau\mid X_0]\right)\\
 &\geq\frac{(\E\tau)^2}{s}.
\end{aligned}
\]
Consequently, $H_1(\tau)\geq(s+1)/s$ holds for any initial transient distribution.
Equality holds if and only if the absorption time has an Erlang distribution.

\subsection{Third-moment ratio bound}

The corresponding bound for $H_2$ is given by the following theorem.
\begin{theorem}[Third-moment bound]
\label{thm:thirdmoment}
If $\tau\sim\PH(\phPi,\phT)$ has a representation with $s$ transient states, then
\begin{equation}
 H_2(\tau)\geq\frac{s+2}{s+1}.
\label{eq:thirdmomentbound}
\end{equation}
Equality holds if and only if $\tau$ has an  
Erlang distribution with $s$ phases.
\end{theorem}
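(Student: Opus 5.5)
The plan is to reduce the third-moment ratio of $\tau$ to a second-moment (variance) statement about a related phase-type variable with the same $s$ transient states, and then apply the Aldous--Shepp argument behind Theorem~\ref{thm:aldousfixed}. The natural candidate is the stationary-excess (equilibrium) transform $\tau^e$, with density $\Prob(\tau>u)/\E\tau=\phPi e^{u\phT}\one/\mu_1$. By \eqref{eq:PHcdf} this is again phase-type, with the same $\phT$ and initial row $\boldsymbol\nu=\phPi\Green/(\phPi\Green\one)$. It has $s$ transient states and moments $\E(\tau^e)^k=\mu_{k+1}/((k+1)\mu_1)$. Hence
\[
H_1(\tau^e)=\frac{\mu_3/(3\mu_1)}{\bigl(\mu_2/(2\mu_1)\bigr)^2}=\frac43\,H_2(\tau),
\]
so a lower bound on the relative variance of $\tau^e$ becomes a lower bound on $H_2(\tau)$.

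Applying the random-initial-state version of Theorem~\ref{thm:aldousfixed} to $\tau^e$ directly only gives $H_2(\tau)\geq 3(s+1)/(4s)$. This is strictly weaker than $(s+2)/(s+1)$, because $\tau^e$ is never Erlang when $\tau$ is: for Erlang $\tau$, $\boldsymbol\nu$ is uniform over the $s$ phases. The key step is therefore to keep the slack term from the law of total variance rather than discard it. Write $m_i=(\Green\one)_i=\E_i\tau$ and $q_i=\E_i\tau^2$. Fixing the initial state and using the martingale $M_t=\E[\tau\mid\mathcal F_t]$ as in Aldous--Shepp, I would aim for the sharper fixed-state inequality
\[
q_i\geq\Bigl(1+\frac1s\Bigr)m_i^2.
\]
I would then average it against $\boldsymbol\nu$ together with the exact identities
\[
\E_{\boldsymbol\nu}[q_X]=\frac{\mu_3}{3\mu_1},\qquad \E_{\boldsymbol\nu}[m_X]=\frac{\mu_2}{2\mu_1}.
\]
What remains is to bound $\E_{\boldsymbol\nu}[m_X^2]$ from below by more than $(\E_{\boldsymbol\nu} m_X)^2$. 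Here I would exploit the special structure of $\boldsymbol\nu$: it is the occupation measure of the chain started from $\phPi$. Consequently $\E_{\boldsymbol\nu}[m_X^2]=\E_{\phPi}\int_0^\tau m_{X_t}^2\,dt/\mu_1$, and along a path $m_{X_t}$ plays the role of the mean remaining time, which decreases on average at unit rate. Cauchy--Schwarz applied along the path, together with the Aldous--Shepp quadratic-variation identity $\Var_i\tau=\E_i\sum(\Delta M)^2$ (summing squared jumps of $M$ across phase changes), should produce the factor that upgrades $3(s+1)/(4s)$ to $(s+2)/(s+1)$.

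I expect this coupling of the two inequalities to be the main obstacle. The two constraints must be combined so that both are tight simultaneously exactly for the Erlang chain: there $m$ takes the values $k/\kappa$, $k=1,\ldots,s$, each with occupation weight $1/\kappa$. If the direct combination does not close, my fallback is to prove a joint inequality for the pair $(\mu_2/\mu_1^2,\ \mu_3/\mu_1^3)$ by induction on the number of phases visited. This would use the first-step decomposition $\tau=E+\tau'$ and the convexity of the admissible region, mirroring the acyclic argument of Osogami--Harchol-Balter. For general, possibly cyclic, $\phT$, I would first reduce to the acyclic case via a path-wise argument on the sequence of distinct states visited.

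For the equality case, every inequality used must be tight. Equality in Aldous--Shepp under each starting state forces $m_{X_t}$ to decrease in equal deterministic steps at equal exponential rates. Equality in the path Cauchy--Schwarz step forces a single initial state. Together these force the Erlang process with $s$ phases. Conversely, \eqref{eq:erlangratios} shows that $\Er_s(\kappa)$ attains $(s+2)/(s+1)$.
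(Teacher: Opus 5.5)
\paragraph{The averaging step is already lossy at the Erlang extremal}
Your identities for the equilibrium transform are correct: $\boldsymbol\nu=\phPi\Green/\mu_1$, $\E_{\boldsymbol\nu}q_X=\mu_3/(3\mu_1)$, $\E_{\boldsymbol\nu}m_X=\mu_2/(2\mu_1)$, and $H_1(\tau^e)=\tfrac43H_2(\tau)$. The main route still cannot reach the sharp constant, because the inequality you average is strict at the Erlang chain.

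For $\Er_s(\kappa)$, the state with $k$ phases remaining has $m_k=k/\kappa$ and $q_k=k(k+1)/\kappa^2$. So $q_k/m_k^2=(k+1)/k>(s+1)/s$ for every $k<s$. The fixed-state Aldous--Shepp bound with constant $s$ is tight only from the top state, while $\boldsymbol\nu$ is uniform over all $s$ states. The requirement you set yourself, that both constraints be tight simultaneously for the Erlang chain, therefore cannot be met with $q_i\geq(1+1/s)m_i^2$. Your equality argument ("equality in Aldous--Shepp under each starting state") fails for the same reason.

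The loss is quantifiable. Any inequality $\E_{\boldsymbol\nu}[m_X^2]\geq c\,(\E_{\boldsymbol\nu}m_X)^2$ valid for all chains must hold for the Erlang chain, where the ratio is $\frac{(s+1)(2s+1)/6}{(s+1)^2/4}=\frac{2(2s+1)}{3(s+1)}$. Your chain
\[
 \mu_1\mu_3=3\mu_1^2\,\E_{\boldsymbol\nu}q_X
 \geq3\mu_1^2\,\frac{s+1}{s}\,\E_{\boldsymbol\nu}m_X^2
 \geq\frac{3(s+1)c}{4s}\,\mu_2^2
\]
then gives at best $H_2(\tau)\geq(2s+1)/(2s)$, which is strictly below $(s+2)/(s+1)$ for every $s\geq2$. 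No path-wise Cauchy--Schwarz applied after this averaging step can recover the loss. A repair would need state-dependent constants in $q_i\geq c_im_i^2$ that are tight along the Erlang path, and the proposal does not supply them.

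\paragraph{The fallback does not handle cyclic chains}
The cyclic case is exactly where the difficulty lies, and the fallback does not address it. Conditioning on the jump path of a cyclic chain, or on the order of first visits, does not give at most $s$ independent exponential stages. Along a jump path there can be more than $s$ holding periods, so the bound with constant $(s+2)/(s+1)$ fails path by path.

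No distributional reduction to acyclic order-$s$ representations exists either. Take the chain $1\to2\to3\to1$ with unit holding rates, return probability $p\in(0,1)$ from state $3$, and exit probability $1-p$ from state $3$, started in state $1$. Its Laplace transform is $(1-p)/((1+u)^3-p)$, which has non-real poles, and no triangular representation produces those. Likewise, the first-step split $\tau=E+\tau'$ does not lower the order, since $\tau'$ still uses all $s$ states when returns are possible.

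\paragraph{What the paper does instead}
The paper takes $L$ to be the total occupation of the initial state, which is $\Exp(\lambda_*)$. It takes $Z$ to be the remaining occupation, which by censoring is $\PH$ with $s-1$ states and subintensity $\widehat{\phT}=\mathbf B+\mathbf q\mathbf r^\trans/\lambda_1$, possibly with an atom at $0$. The price is that $L$ and $Z$ are dependent. The joint transform $\Phi_1(v,u)=F(u)d(u)/(v+d(u))$ quantifies this through $\eta_1=\operatorname{Cov}(L,Z)\geq0$ and $\eta_2\geq0$. The induction then closes via the explicit nonnegative decomposition of $\Delta_s(\tau)$. This treatment of returns to the initial state is the ingredient your proposal is missing.
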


Osogami and Harchol-Balter \cite{osogami2006closed}
proved this bound for acyclic PH distributions by
induction on the number of phases.
We retain their framework to treat general PH representations. 

Firstly, it suffices to prove the bound for a fixed initial state. Then the absorption time is decomposed as $\tau=L_1+Z$, where $L_1$ is the total time spent in the initial state and $Z$ is the remaining occupation time. A Laplace transform shows that $Z$ admits an $(s-1)$-state PH representation, possibly with initial absorbing mass. Finally, we calculate the mixed moments of $(L_1,Z)$ from the joint Laplace transform and finish the induction by an algebraic decomposition.

For an $s$-state phase-type random variable $\tau$, define the difference
\begin{equation}
 \Delta_s(\tau)=\mu_1\mu_3-\frac{s+2}{s+1}\mu_2^2.
\label{eq:targetdefect}
\end{equation}
The desired inequality is exactly $\Delta_s(\tau)\geq0$.
For $s=1$, the absorption time $\tau=L_1$ is exponential and $\Delta_1(L_1)=0$. This is the base case of the induction.

\paragraph{Fix the initial state}
\label{sec:fixedinitial}
\label{sec:thirdmomentstatement}

The absorption-time law under initial distribution $\phPi$
is a mixture of the laws corresponding to fixed initial states.
Suppose that each such law satisfies the bound
\[
 \E_i\tau^3
 \geq c\frac{(\E_i\tau^2)^2}{\E_i\tau},
 \qquad i=1,\ldots,s,
\]
for a common constant $c>0$. The 
weighted Cauchy--Schwarz inequality gives
\begin{equation}
   \E_{\phPi}\tau^3
 =\sum_i\pi_i\E_i\tau^3
 \geq c\sum_i\pi_i
 \frac{(\E_i\tau^2)^2}{\E_i\tau}
 \geq c\frac{(\E_{\phPi}\tau^2)^2}{\E_{\phPi}\tau}.
 \label{eq:mixturebound}
\end{equation}
Thus the same bound holds for any initial transient distribution.
Equality holds if and only if every conditional law with $\pi_i>0$
attains the bound and the ratios
$\E_i\tau^2/\E_i\tau$
are equal over these initial states.
Hence, it is sufficient to consider a fixed initial state. We label this state as state 1. In the following discussion, probabilities and expectations are taken with $X_0=1$, unless a different initial state or distribution is explicitly specified.

\paragraph{Elimination of the initial state}
\label{sec:stateelimination}
Write the absorption time as
\begin{equation}
 \tau=L+Z,
\label{eq:timepartition}
\end{equation}
where $L=L_1$ is the total time spent in state $1$ and $Z=\sum_{i=2}^s L_i$ is the remaining occupation time. 
In order to determine the distributions of  $Z$ and $L$, we
partition the transient subintensity matrix as
\begin{equation}
 \phT=\begin{pmatrix}-\lambda_1&\mathbf r^\trans\\
 \mathbf q&\mathbf B\end{pmatrix}.\qquad
 \phExit=\begin{pmatrix}t_1\\\mathbf t_B\end{pmatrix},
\label{eq:Sblock}
\end{equation}
The row $\mathbf r^\trans$ contains rates from state 1 into the other
states, and $\mathbf q$ contains their return rates to state 1.
 The block $\mathbf B$ describes the remaining process stopped upon either returning to state 1 or being absorbed. It is transient and $-\mathbf B\one=\mathbf q+\mathbf t_B$.

\begin{lemma}
\label{lem:smallerPH}
The variable $Z$ has a PH representation with $s-1$ transient states,
possibly with initial absorbing mass. Set
$\widehat{\phT}=\mathbf B+{\mathbf q\mathbf r^\trans}/{\lambda_1}$.
Then
\begin{equation}
 Z\sim\PH\Bigl(
  \frac{t_1}{\lambda_1},
  \frac{\mathbf r^\trans}{\lambda_1},
\widehat{\phT}\Bigr).
\label{eq:censored}
\end{equation}
Moreover, the variable $L$ has an exponential distribution. Set $\lambda_*=\lambda_1-\mathbf r^\trans(-\mathbf B)^{-1}\mathbf q$. Then
\begin{equation}
    L\sim\Exp(\lambda_*).
\end{equation}
\end{lemma}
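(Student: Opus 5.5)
We argue through the excursion structure of the chain started in state~$1$, using the strong Markov property at the jump times out of and back into state~$1$. Each visit to state~$1$ lasts an $\Exp(\lambda_1)$ holding time. At its end the chain is absorbed with probability $t_1/\lambda_1$, or jumps to state $j\geq2$ with probability $r_j/\lambda_1$. From $j$ the chain evolves in $\{2,\ldots,s\}$ with subgenerator $\mathbf B$ until it either returns to state~$1$ (rates $\mathbf q$) or is absorbed (rates $\mathbf t_B$). Since $\mathbf B$ is a principal submatrix of a transient subintensity matrix, it is itself transient and $(-\mathbf B)^{-1}$ exists. The return probabilities are then given by $\mathbf h=(-\mathbf B)^{-1}\mathbf q$, with $h_j=\Prob_j(\text{return to }1\text{ before }\Omega)$.

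For $L$, the number of visits $N$ to state~$1$ is geometric: after each visit the chain comes back with probability
\[
\theta=\frac{\mathbf r^\trans(-\mathbf B)^{-1}\mathbf q}{\lambda_1},
\]
independently of the holding times. A geometric sum of i.i.d.\ $\Exp(\lambda_1)$ variables with success probability $1-\theta$ is $\Exp(\lambda_1(1-\theta))$, and $\lambda_1(1-\theta)=\lambda_*$. To check this, compute the Laplace transform
\[
\E e^{-uL}=\sum_{n\geq1}(1-\theta)\theta^{n-1}\Bigl(\frac{\lambda_1}{\lambda_1+u}\Bigr)^n=\frac{\lambda_*}{\lambda_*+u}.
\]
Since $\theta<1$, we have $\lambda_*>0$.

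For $Z$, the plan is a censoring argument: delete the time spent in state~$1$ and glue the remaining path segments together. The censored process $\widehat X$ lives on $\{2,\ldots,s,\Omega\}$, and $Z$ is exactly its absorption time. The initial law of $\widehat X$ is the exit distribution from the first visit to state~$1$, namely $\mathbf r^\trans/\lambda_1$ on $\{2,\ldots,s\}$ and $t_1/\lambda_1$ on $\Omega$. Its dynamics are obtained as follows. From $j\geq2$ the chain moves within $\{2,\ldots,s\}$ at rate $\mathbf B$ and is absorbed at rate $(\mathbf t_B)_j$. At rate $q_j$ it enters state~$1$, spends time there that is deleted, and leaves to $k$ with probability $r_k/\lambda_1$ or to $\Omega$ with probability $t_1/\lambda_1$. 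The effective generator on the transient states is therefore
\[
\widehat{\phT}=\mathbf B+\frac{\mathbf q\mathbf r^\trans}{\lambda_1},
\]
with effective exit rates $\mathbf t_B+\mathbf q\,t_1/\lambda_1$.

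The main obstacle is making this censoring step rigorous, in particular checking that $\widehat X$ is a CTMC with these rates. The deleted sojourns are independent of the rest of the path, and the exit law from state~$1$ does not depend on how state~$1$ was entered, but this still has to be turned into a proof. To avoid path-level arguments, I will instead verify the claim through the Laplace transform. Let $\phi_j(u)=\E_j e^{-uZ}$ for $j\geq2$ and $\phi_1(u)=\E_1e^{-uZ}$. First-step analysis gives
\[
\phi_1=\frac{t_1}{\lambda_1}+\frac{\mathbf r^\trans\boldsymbol\phi}{\lambda_1},
\]
because time in state~$1$ is not counted in $Z$. For the states $j\geq2$, first-step analysis gives $(uI-\mathbf B)\boldsymbol\phi=\mathbf t_B+\mathbf q\phi_1$. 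Substituting $\phi_1$ into this equation yields
\[
\bigl(uI-\widehat{\phT}\bigr)\boldsymbol\phi=\mathbf t_B+\mathbf q\,\frac{t_1}{\lambda_1}=-\widehat{\phT}\one.
\]
The last equality follows from the row-sum identities $-\mathbf B\one=\mathbf q+\mathbf t_B$ and $\mathbf r^\trans\one+t_1=\lambda_1$. Hence $\phi_1(u)=t_1/\lambda_1+(\mathbf r^\trans/\lambda_1)(uI-\widehat{\phT})^{-1}\widehat{\phExit}$ with $\widehat{\phExit}=-\widehat{\phT}\one$. This is precisely the Laplace transform of $\PH\bigl(t_1/\lambda_1,\mathbf r^\trans/\lambda_1,\widehat{\phT}\bigr)$, and uniqueness of Laplace transforms gives the claim.

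Two points remain to check. First, $\widehat{\phT}$ must be admissible in the sense of Proposition~\ref{prop:transience}. Its off-diagonal entries are nonnegative because $\mathbf q,\mathbf r\geq\zero$, and its row sums are $-\widehat{\phExit}\leq\zero$. For nonsingularity, note that $-\widehat{\phT}$ is the Schur complement of $\lambda_1$ in $-\phT$, so
\[
\det(-\phT)=\lambda_1\det(-\widehat{\phT})\neq0.
\]
Second, the inversion of $uI-\widehat{\phT}$ must be justified for $u>0$. This holds because $\widehat{\phT}$ is a nonsingular subintensity matrix, so all its eigenvalues have negative real part.
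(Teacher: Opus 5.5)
Your proof is correct, and for $Z$ it is the paper's argument. The paper writes the first-step system for the joint transform $\Phi_i(v,u)=\E_i e^{-vL-uZ}$ and sets $v=0$, which is exactly your system for $\phi_j$. It then checks admissibility of $\widehat{\phT}$ with the same row-sum identity and the same block-determinant relation $\det\widehat{\phT}=-\det\phT/\lambda_1$.

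The genuine difference is in $L$. The paper stays inside the joint transform: putting $u=0$ and eliminating $\boldsymbol\Phi_B$ gives $\E e^{-vL}=\lambda_*/(v+\lambda_*)$. It gets $\lambda_*>0$ from $\E L=U_{11}=1/\lambda_*$ via the block-inverse formula for $\Green=(-\phT)^{-1}$. You instead decompose into excursions and write $L$ as a geometric number of i.i.d.\ $\Exp(\lambda_1)$ sojourns. This makes the meaning of $\lambda_*=\lambda_1(1-\theta)$ transparent: the departure rate thinned by the escape probability. It relies on the strong Markov property at return times and on independence of sojourn lengths from excursion outcomes.

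Your claim $\theta<1$ needs a line of justification. If $\theta=1$, the chain would revisit state~$1$ infinitely often almost surely, contradicting absorption; equivalently, $\lambda_1(1-\theta)=1/U_{11}>0$.

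What the paper's route buys is reuse. The same joint transform factorizes as $\Phi_1(v,u)=F(u)\,d(u)/(v+d(u))$. This immediately yields the mixed moments $\E(LZ)$, $\E(L^2Z)$, $\E(LZ^2)$ needed in the third-moment bound. Your separate marginal computations for $L$ and $Z$ would have to be supplemented by a joint first-step analysis at that stage.
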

\begin{proof}
Define the joint Laplace transform of \((L,Z)\) by
\[
 \Phi_i(v,u)=\E[e^{-vL-uZ}\mid X_0=i],\quad v,u\geq0.
\]
Write $\boldsymbol\Phi_B(v,u)=(\Phi_2(v,u),\ldots,\Phi_s(v,u))^\trans$
for the column vector of transforms corresponding to initial states
$2,\ldots,s$.
In particular, $\Phi_1(0,u)=\E e^{-uZ}$ is the transform required for this lemma.

Write $\lambda_i=-t_{ii}$, where $t_{ij}$ are the entries of $\phT$.
Let $H$ be the first holding time at state $1$ and $I$
the state reached at the first jump. Then $H\sim\Exp(\lambda_1)$ and $H$
is independent of $I$. All of $Z$ is accumulated
after the first jump. 
Conditioning on $H$, if $I$ is transient, the subsequent occupation times have joint
transform $\Phi_i(v,u)$ for $i=2,\ldots,s$.  If $I$ is absorbing, both
subsequent occupation times are zero. Consequently,
\[
\begin{aligned}
 \Phi_1(v,u)
 &=\E\!\left[e^{-vH}e^{-v(L-H)-uZ}\right]\\
 &=\E[e^{-vH}]
 \left(\Prob(I=\Omega)
 +\sum_{i=2}^s\Prob(I=i)\Phi_i(v,u)
   \right)\\
 &=\frac{\lambda_1}{\lambda_1+v}
   \left(
     \frac{t_1}{\lambda_1}
     +\frac{\mathbf r^\trans\boldsymbol\Phi_B(v,u)}{\lambda_1}
   \right).
\end{aligned}
\]
The first holding time contributes
to $Z$ if the chain starts from $i\neq1$. 
The next jump leads to state 1, to another remaining state, or to
absorption. The same conditioning gives
\[
 \Phi_i(v,u)=\frac{\lambda_i}{\lambda_i+u}
 \left(\frac{t_{i1}}{\lambda_i}\Phi_1(v,u)
       +\sum_{\substack{j=2\\j\ne i}}^s
          \frac{t_{ij}}{\lambda_i}\Phi_j(v,u)
       +\frac{t_i}{\lambda_i}\right).
\]
Collecting these equations in matrix form, we obtain
\begin{equation}
 \begin{aligned}
 (v+\lambda_1)\Phi_1&=t_1+\mathbf r^\trans\boldsymbol\Phi_B,\\
 (uI-\mathbf B)\boldsymbol\Phi_B&=\mathbf q\Phi_1+\mathbf t_B.
 \end{aligned}
\label{eq:firstjumpjoint}
\end{equation}

Set $v=0$ to obtain the transform of
$Z$ alone. The equation
then gives
\[
\begin{aligned}
 \Phi_1(0,u)
 &=\frac{t_1}{\lambda_1}
   +\frac{\mathbf r^\trans}{\lambda_1}\boldsymbol\Phi_B(0,u),\\
 \left(uI-\widehat{\phT}\right)\boldsymbol\Phi_B(0,u)
 &=\mathbf t_B+\frac{t_1}{\lambda_1}\mathbf q.
\end{aligned}
\]
The off-diagonal entries of $\widehat{\phT}$ are nonnegative. 
Set
\(\widehat{\phExit}=\mathbf t_B+\frac{t_1}{\lambda_1}\mathbf q\).
The row sums of $\widehat{\phT}$ satisfy
\[
 -\widehat{\phT}\one
 =\mathbf q+\mathbf t_B
   -\frac{\mathbf q\mathbf r^\trans\one}{\lambda_1}
 =\widehat{\phExit}\geq\zero.
\]
Using the block form \eqref{eq:Sblock}, we obtain 
\[
\det(\widehat{\phT})=-\frac1{\lambda_1}\det(\phT)\neq0.
\]
Thus $\widehat{\phT}$ is a valid
transient subintensity matrix according to Proposition~\ref{prop:transience}. Also, $uI-\widehat{\phT}$ is invertible
for every $u\geq0$.
The proposed initial row and absorbing mass are nonnegative and
have total mass
\[
 \frac{\mathbf r^\trans\one}{\lambda_1}
 +\frac{t_1}{\lambda_1}=1.
\]

Solving for $\boldsymbol\Phi_B(0,u)$ and substituting into
$\Phi_1(0,u)$ gives
\begin{equation}
 \E e^{-uZ}=\Phi_1(0,u)=\frac{t_1}{\lambda_1}
 +\frac{\mathbf r^\trans}{\lambda_1}
 (uI-\widehat{\phT})^{-1}\widehat{\phExit}.
\label{eq:smallerPHtransform}
\end{equation}
The right-hand side is the Laplace transform of the PH
representation including its mass at zero.
By uniqueness of Laplace transforms, $Z$ has the stated PH representation.

Since $\E L=U_{11}$ is the first entry of $\Green$,
the block inverse formula gives
$\lambda_*=1/\E L>0$.
Setting $u=0$ in \eqref{eq:firstjumpjoint}  yields
\[
 \E e^{-vL}=\Phi_1(v,0)
 =\frac{\lambda_*}{v+\lambda_*}.
\]
Thus $L$ has the stated exponential distribution.
\end{proof}

Kulkarni \cite{kulkarni1989} and Bladt--Nielsen
\cite[Theorem~3.1.33]{bladt2017matrix} use a jump-chain construction
to show that accumulated nonnegative state-dependent rewards
have a PH distribution.
Lemma~\ref{lem:smallerPH} is a special case of their theorem. We keep this proof since the joint Laplace transform
derived will also be used to compute the mixed moments of $L$ and $Z$.

\paragraph{Mixed moments}

Since the moment ratio $H_2$ is invariant under time scaling,
we may multiply all occupation times by $\lambda_*$ and replace
$\phT$ by $\phT/\lambda_*$. By Lemma~\ref{lem:smallerPH}, the rescaled occupation time satisfies
\[
 L\sim\Exp(1),\qquad \E L=1.
\]
The same notation is retained for the rescaled variables and generator.

Write $z_j=\E Z^j$ for $j\ge1$. Expanding $\tau=L+Z$ gives
\[
\begin{aligned}
 \mu_1&=1+z_1,\\
 \mu_2&=2+2\E(LZ)+z_2,\\
 \mu_3&=6+3\E(L^2Z)+3\E(LZ^2)+z_3.
\end{aligned}
\]
We now compute the mixed moments $\E(LZ)$, $\E(L^2Z)$, and
$\E(LZ^2)$ with the joint transform
$\Phi_1(v,u)=\E e^{-vL-uZ}$.

Define the auxiliary functions 
  \[
 F(u)=\E e^{-uZ},\qquad
 d(u)=\lambda_1-\mathbf r^\trans(uI-\mathbf B)^{-1}\mathbf q.
\]
Eliminating $\boldsymbol\Phi_B$ from \eqref{eq:firstjumpjoint} gives
\[
 (v+d(u))\Phi_1(v,u)
 =t_1+\mathbf r^\trans(uI-\mathbf B)^{-1}\mathbf t_B.
\]
The right-hand side does not depend on $v$. Setting $v=0$ and
using $\Phi_1(0,u)=F(u)$ identifies it as $d(u)F(u)$. Hence
\[
 \Phi_1(v,u)=F(u)\frac{d(u)}{v+d(u)}.
\]
Differentiating $\Phi_1(v,u)$ once and twice with respect to $v$ and setting $v=0$ yields
\begin{equation}
\begin{aligned}
 \E(Le^{-uZ})=\frac{F(u)}{d(u)},\qquad
 \E(L^2e^{-uZ})=\frac{2F(u)}{d(u)^2}.
\end{aligned}
\label{eq:jointtransform}
\end{equation}

Define the first two derivatives of $d$ at zero as 
\begin{equation}
\begin{aligned}
 \eta_1=d'(0)=\mathbf r^\trans(-\mathbf B)^{-2}\mathbf q,\qquad
 \eta_2=-d''(0)=2\mathbf r^\trans(-\mathbf B)^{-3}\mathbf q.
\end{aligned}
\label{eq:returnmoments}
\end{equation}
Since $\mathbf B$ is a transient subintensity matrix,
$(-\mathbf B)^{-1}$ is entrywise nonnegative.
Together with $\mathbf r,\mathbf q\geq\zero$,
the definition implies $\eta_1,\eta_2$ are nonnegative.

We differentiate \eqref{eq:jointtransform} with respect to \(u\) and set \(u=0\).
Using $d(0)=1$ and
$F^{(j)}(0)=(-1)^jz_j$, we obtain
\begin{equation}
\begin{aligned}
 \E(LZ)&=-\left.\frac{d}{du}\frac{F(u)}{d(u)}\right|_{u=0}
          =z_1+\eta_1,\\
 \E(L^2Z)&=-\left.\frac{d}{du}\frac{2F(u)}{d(u)^2}\right|_{u=0}
          =2z_1+4\eta_1,\\
 \E(LZ^2)&=\left.\frac{d^2}{du^2}\frac{F(u)}{d(u)}\right|_{u=0}
          =z_2+2z_1\eta_1+2\eta_1^2+\eta_2.
\end{aligned}
\label{eq:mixedoccupationmoments}
\end{equation}
In particular, $\E L=1$ implies
$\eta_1=\E(LZ)-(\E L)(\E Z)=\operatorname{Cov}(L,Z)$.

Substituting \eqref{eq:mixedoccupationmoments} into the expansions
of $\tau=L+Z$  yields
\begin{equation}
\begin{aligned}
 \mu_1&=1+z_1,\\
 \mu_2&=2+2z_1+z_2+2\eta_1,\\
 \mu_3&=6+6z_1+3z_2+z_3
              +6(z_1+2)\eta_1+6\eta_1^2+3\eta_2.
\end{aligned}
\label{eq:threemomentexpansion}
\end{equation}

\paragraph{Nonnegative decomposition}
If $z_1=0$, then $Z=0$ almost surely and $\Delta_s(\tau)=\Delta_s(L)=2(s-1)/(s+1)>0$. For the remainder of the proof, assume that $z_1>0$.

The variable $Z$ may have  absorbing mass at zero, while the
induction hypothesis concerns PH distributions without mass at zero.
Let $p=\Prob(Z>0)$ and let $Z^*$ have the
conditional law of $Z$ given $Z>0$.
Since $z_j=p\E\bigl[(Z^*)^j\bigr]$ for $j\geq1$, we have
$$H_2(Z)=H_2(Z^*),\qquad H_1(Z)=H_1(Z^*)/p.$$
The induction hypothesis and the variance bound yield that
\begin{equation}
 z_1z_3\geq\frac{s+1}{s}z_2^2,\qquad
 z_2\geq\frac{s}{s-1}z_1^2.
\label{eq:smallmomentbounds}
\end{equation}

In $\Delta_s(\tau)$, the coefficients of $z_3$ and $\eta_2$ are
$1+z_1$ and $3(1+z_1)$, respectively. To use the induction
hypothesis, write
\[
 (1+z_1)z_3
 =\frac{1+z_1}{z_1}\Delta_{s-1}(Z)
  +\frac{(1+z_1)(s+1)}{s}\frac{z_2^2}{z_1}.
\]
Substitution into \eqref{eq:targetdefect} therefore gives
\begin{equation}
 \Delta_s(\tau)=\frac{1+z_1}{z_1}\Delta_{s-1}(Z)
               +3(1+z_1)\eta_2+\mathcal R_s,
\label{eq:exactdefect}
\end{equation}
where
\begin{equation}
\begin{aligned}
 \mathcal R_s={}&(1+z_1)\left[
 6+6z_1+3z_2+\frac{s+1}{s}\frac{z_2^2}{z_1}
 +6(z_1+2)\eta_1+6\eta_1^2\right]\\
 &-\frac{s+2}{s+1}(2+2z_1+z_2+2\eta_1)^2.
\end{aligned}
\label{eq:remainder}
\end{equation}
The first two terms in \eqref{eq:exactdefect} are nonnegative.
The remainder depends only on $z_1,z_2,\eta_1$. Set
\begin{equation}
 \varepsilon_m=\frac{z_1}{s-1}-1,\qquad
 \varepsilon_v=\frac{z_2}{z_1}-\frac{s}{s-1}z_1.
\label{eq:erlangdefects}
\end{equation}
Direct expansion gives
\begin{equation}
\begin{aligned}
 (s+1)\mathcal R_s={}&2(s-1)(\eta_1-\varepsilon_m)^2+\frac{z_1^2}{s}(\varepsilon_v+s\varepsilon_m)^2
   +\frac{z_1}{s}\varepsilon_v^2\\
 &+z_1(s+2)(\varepsilon_v-2\eta_1+\varepsilon_m)^2\\
 &+2(s-1)z_1\eta_1^2+z_1(3s-2)\varepsilon_m^2\\
 &+\frac{z_1}{s-1}\left[(s^2+1)z_1+(s-1)^2\right]\varepsilon_v\\
 &+2z_1\left[(s-1)z_1+3s-1\right]\eta_1.
\end{aligned}
\label{eq:gsos}
\end{equation}
Since
$\varepsilon_v,\eta_1\geq0$, this is a sum of six squares with
positive coefficients and two nonnegative terms. Thus $\mathcal R_s\geq0$ and
$\Delta_s(\tau)\geq0$. 
This completes the induction.

\paragraph{Equality characterization}
First fix the initial state and use the normalization $\E L=1$.
For $s=1$, the law is exponential. For $s\geq2$, equality excludes
$z_1=0$. The strictly positive coefficients of
$\varepsilon_v^2$, $\eta_1^2$, and $\varepsilon_m^2$ in \eqref{eq:exactdefect} and
\eqref{eq:gsos} force
\[
 z_1=s-1,\qquad z_2=s(s-1),\qquad \eta_1=\eta_2=0.
\]
Equation~\eqref{eq:threemomentexpansion}
then gives
\[
 \E\tau=s,\qquad \Var(\tau)=s.
\]
The normalized PH time attains the minimum-variance bound. The equality characterization shows that the representing chain is a unit-rate Erlang process.
Time scaling changes only the rate and preserves the Erlang form of the distribution.

For a random initial state, equality in the mixture argument in \eqref{eq:mixturebound} requires each conditional law with $\pi_i>0$ to attain the bound. Hence, each conditional law is an Erlang law with $s$ phases. These laws must also have the same ratio $\E_i\tau^2/\E_i\tau$. For an Erlang law of rate $\kappa$, this ratio is $(s+1)/\kappa$. Hence their rates coincide, and their mixture is the same Erlang law.

Conversely,  \eqref{eq:erlangratios} gives $H_2=(s+2)/(s+1)$.

\subsection{Moment ratio bounds for acyclic PH distributions}
\label{sec:acyclicratios}
Sections~\ref{sec:momentratios} and~\ref{sec:thirdmomentstatement}
establish \eqref{eq:ratiosbounds} for $k=1,2$.
To investigate the higher-order cases, we tested \(k=3,4,5\) for \(s=2,\ldots,12,16,20\) by randomly sampling cyclic PH representations and performing numerical minimization.
No violations of the bound were found within a tolerance of $10^{-10}$.
However, a general proof for $k\geq3$ remains open.

For acyclic PH distributions, the following theorem establishes the
bound for every $k\geq1$ and characterizes equality.

\begin{theorem}[Acyclic moment ratio bound]
\label{thm:acyclicratios}
If $\tau\sim\PH(\phPi,\phT)$ has an acyclic representation with
$s$ transient states, then
\begin{equation}
 H_k(\tau)=\frac{\mu_{k-1}\mu_{k+1}}{\mu_k^2}
 \geq\frac{s+k}{s+k-1},
 \qquad k\geq1.
\label{eq:acyclicmomentbound}
\end{equation}
For each fixed $k\geq1$, equality holds if and only if $\tau$ has an
  Erlang distribution with $s$ phases.
\end{theorem}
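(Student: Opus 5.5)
The plan is to reduce the acyclic case to sums of independent exponentials and then prove the bound for those by a log-convexity argument. This avoids the induction used for Theorem~\ref{thm:thirdmoment}.

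\emph{Step 1: reduction to hypoexponential laws.} Order the transient states topologically, so that the chain can only move in one direction. Fix the initial state. Every sample path then visits a sequence of distinct states $i_1\to i_2\to\cdots\to i_m\to\Omega$ with $m\le s$. Given the jump-chain path, the holding times are independent with laws $\Exp(\lambda_{i_1}),\ldots,\Exp(\lambda_{i_m})$. Hence $\tau$ is a finite mixture of hypoexponential laws $Y_\theta=\sum_{\ell=1}^m\theta_\ell E_\ell$, where the $E_\ell$ are i.i.d.\ $\Exp(1)$ and $\theta_\ell=1/\lambda_{i_\ell}$.

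Next I extend the mixture argument \eqref{eq:mixturebound} from $k=2$ to general $k$. Suppose each component satisfies $\mu_{k+1}\ge c\,\mu_k^2/\mu_{k-1}$. Then weighted Cauchy--Schwarz gives
\[
\sum\pi_i\mu_{k+1,i}\ \ge\ c\sum\pi_i\frac{\mu_{k,i}^2}{\mu_{k-1,i}}\ \ge\ c\,\frac{\bigl(\sum\pi_i\mu_{k,i}\bigr)^2}{\sum\pi_i\mu_{k-1,i}}.
\]
The constant $(m+k)/(m+k-1)$ is strictly decreasing in $m$. So it suffices to prove $H_k(Y_\theta)\ge (m+k)/(m+k-1)$ for a sum of $m$ independent exponentials with arbitrary positive means $\theta_1,\ldots,\theta_m$.

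\emph{Step 2: moments via complete symmetric polynomials.} The Laplace transform of $Y_\theta$ is $\prod_\ell(1+\theta_\ell u)^{-1}$. Expanding it gives $\E Y_\theta^k=k!\,h_k(\theta)$, where $h_k$ is the complete homogeneous symmetric polynomial. Therefore
\[
H_k(Y_\theta)=\frac{k+1}{k}\,\frac{h_{k-1}h_{k+1}}{h_k^2}.
\]
For equal $\theta$ we have $h_k=\binom{m+k-1}{k}\theta^k$, and this reproduces \eqref{eq:erlangratios}. Set $e_k=h_k/\binom{m+k-1}{k}$. A direct computation then shows that the target inequality is exactly $e_{k-1}e_{k+1}\ge e_k^2$, i.e.\ log-convexity of the normalized sequence $(e_k)$.

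\emph{Step 3: the key identity.} This step is the main obstacle. I would prove the Dirichlet-average representation
\[
e_k(\theta)=\E\bigl[(\theta\cdot W)^k\bigr],
\]
where $W$ is uniform on the simplex $\{w\ge0,\ \sum w_\ell=1\}$, i.e.\ $W\sim\mathrm{Dirichlet}(1,\ldots,1)$. I would first try generating functions. The claim is equivalent to
\[
\E\bigl[(1-t\,\theta\cdot W)^{-m}\bigr]=\prod_\ell(1-t\theta_\ell)^{-1},
\]
which is Carlson's Dirichlet-average identity. Alternatively one can expand $(\theta\cdot W)^k$ multinomially and use the Dirichlet moments $\E\prod W_\ell^{a_\ell}=(m-1)!\prod a_\ell!/(m-1+k)!$. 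Every multi-index with $|a|=k$ then receives weight $k!(m-1)!/(m+k-1)!$, which gives exactly $h_k/\binom{m+k-1}{k}$.

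Once the identity holds, write $Y=\theta\cdot W\ge0$. Cauchy--Schwarz gives $\E[Y^k]^2\le\E[Y^{k-1}]\,\E[Y^{k+1}]$, which proves the bound.

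\emph{Step 4: equality characterization.} Equality in the Cauchy--Schwarz step forces $Y$ to be almost surely constant on the simplex. This happens if and only if all $\theta_\ell$ are equal, so the component is Erlang. In the mixture step, equality requires three things. First, every path with positive probability must have length exactly $m=s$, because the constant is strict for $m<s$. Second, each such component must be Erlang. Third, the ratios $\mu_{k,i}/\mu_{k-1,i}=(s+k-1)/\kappa_i$ must coincide, which forces a common rate. The same reasoning applies to mixing over initial states. Hence $\tau\sim\Er_s(\kappa)$. The converse direction is \eqref{eq:erlangratios}.
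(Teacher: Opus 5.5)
Your proposal is correct and has the same skeleton as the paper's proof. You decompose along jump-chain paths of the topologically ordered chain and write each conditional absorption time as $\sum_\ell\theta_\ell E_\ell$ with moments $j!\,h_j(\theta)$. You then reduce the claim to log-convexity of the normalized sequence $h_j/\binom{m+j-1}{j}$ and pass to the mixture with the weighted Cauchy--Schwarz inequality.

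The genuine difference is how you justify the key step. The paper invokes Schur's inequality \eqref{eq:schurcomplete} and its equality case as a cited result. You prove it through the Dirichlet-average identity $h_k(\theta)/\binom{m+k-1}{k}=\E[(\theta\cdot W)^k]$, with $W$ uniform on the simplex. This turns the inequality into log-convexity of moments of the single nonnegative variable $\theta\cdot W$, and it makes the equality case transparent. Equality forces $\theta\cdot W$ to be almost surely constant, and constancy on the simplex forces all $\theta_\ell$ to be equal, as you see by evaluating at the vertices. For $k\geq2$ this step uses $\theta\cdot W\geq\min_\ell\theta_\ell>0$, which excludes $\theta\cdot W\in\{0,c\}$; you should state that explicitly. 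Your route is self-contained at the cost of a few extra lines, while the paper's is shorter but leans on the literature.

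Two smaller differences are harmless:
\begin{itemize}
\item The paper pads $\theta$ with zeros to $s$ entries and applies the bound with constant $(s+k)/(s+k-1)$ directly. You instead work with the path length $m$ and use monotonicity of $(m+k)/(m+k-1)$ in $m$.
\item In the equality case, the paper observes that with strictly decreasing indices the only path of length $s$ is $(s,s-1,\ldots,1,\Omega)$. It then gets $\pi_s=1$, a slightly stronger structural conclusion. Your third condition, a common rate across Erlang components, is therefore automatically satisfied, though including it does no harm.
\end{itemize}
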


\begin{proof}
Since the representation is acyclic,
we may relabel the transient states so that $\phT$ is lower triangular.
Every jump between transient states decreases the state index.
Hence, each transient state is visited at most once.

Fix a path of positive probability,
\[
  J=(i_1,\ldots,i_\ell,\Omega),
 \qquad s\geq i_1>\cdots>i_\ell \geq 1,\qquad 1\leq\ell\leq s.
\]
Let $\tau_J$ be the absorption
time conditional on this path, and set
$\mu_j^{(J)}=\E\tau_J^j$ and $\mu_0^{(J)}=1$.
At state $i$, the holding time is exponential with rate
$\lambda_i=-t_{ii}$ and is independent of the next state.
Conditional on $J$, the successive holding times are independent,
so
\[
 \tau_J=\sum_{r=1}^{\ell}\frac{1}{\lambda_{i_r}}E_r,
 \qquad E_r\overset{\mathrm{iid}}{\sim}\Exp(1).
\]
Set $\theta_r=\lambda_{i_r}^{-1}$ for $1\leq r\leq\ell$ and
$\theta_r=0$ for $\ell<r\leq s$. We can  write
\begin{equation}
\tau_J=\sum_{r=1}^s\theta_rE_r.
\end{equation}

To calculate the moments, define the complete homogeneous symmetric
polynomials
\[
 h_j(\theta)=
 \sum_{\substack{a_1+\cdots+a_s=j\\a_1,\ldots,a_s\geq0}}
 \theta_1^{a_1}\cdots\theta_s^{a_s},
 \qquad h_0(\theta)=1.
\]
The multinomial
formula and $\E[E_r^j]=j!$ give
\begin{equation}
 \mu_j^{(J)}
 =\sum_{a_1+\cdots+a_s=j}
   \frac{j!}{a_1!\cdots a_s!}
   \prod_{r=1}^s\theta_r^{a_r}\E[E_r^{a_r}]
 =j!h_j(\theta).
\label{eq:pathhomogeneousmoments}
\end{equation}

For nonnegative $\theta_1,\ldots,\theta_s$, Schur's inequality
\cite{pecaric2009integral} states that the normalized polynomials
\[
 a_j(\theta)=\frac{h_j(\theta)}{\binom{s+j-1}{j}}
\]
satisfy
\begin{equation}
 a_k(\theta)^2\leq a_{k-1}(\theta)a_{k+1}(\theta),
 \qquad k\geq1.
\label{eq:schurcomplete}
\end{equation}
 Since $\mu_j^{(J)}={(s+j-1)!}/{(s-1)!}a_j(\theta)$, \eqref{eq:schurcomplete} gives the path bound
 \begin{equation}\frac{\mu_{k-1}^{(J)}\mu_{k+1}^{(J)}}{(\mu_k^{(J)})^2}=\frac{(s+k-2)!\,(s+k)!}{\bigl((s+k-1)!\bigr)^2}\, \frac{a_{k-1}(\theta)a_{k+1}(\theta)}{a_k(\theta)^2}\geq\frac{s+k}{s+k-1}.\label{eq:pathmomentbound}\end{equation}

For an arbitrary initial transient distribution, the absorption-time law is a mixture of conditional laws of $\tau_J$.
Let $p_J$ be the probability of path $J$, so that
$\mu_j=\sum_Jp_J\mu_j^{(J)}$.
Set $c_{s,k}=(s+k)/(s+k-1)$. Applying \eqref{eq:pathmomentbound}
and the weighted Cauchy--Schwarz inequality gives
\begin{equation}
 \mu_k
 \leq\frac{1}{\sqrt{c_{s,k}}}
       \sum_Jp_J\sqrt{\mu_{k-1}^{(J)}\mu_{k+1}^{(J)}}
 \leq\frac{\sqrt{\mu_{k-1}\mu_{k+1}}}{\sqrt{c_{s,k}}}.
\label{eq:acyclicpathmixture}
\end{equation}
This proves \eqref{eq:acyclicmomentbound}.

We now characterize equality for a fixed \(k\geq1\), letting \(J\)
range over all paths with probability \(p_J>0\).
Suppose that equality holds in \eqref{eq:acyclicmomentbound}.
Then both inequalities in \eqref{eq:acyclicpathmixture} must be
equalities. Since the first inequality is obtained by summing
the path bounds with positive weights, every path \(J\) of
positive probability must satisfy \(H_k(\tau_J)=c_{s,k}\).
If such a path visits only \(\ell<s\) transient states, applying
the same path bound with \(\ell\) variables gives
\[
 H_k(\tau_J)
 \geq\frac{\ell+k}{\ell+k-1}
 >\frac{s+k}{s+k-1}
 =c_{s,k},
\]
a contradiction. Hence every path of positive probability visits
all \(s\) transient states. Since the state indices decrease
strictly along each path, the only possible such path is
\[
 J_*=(s,s-1,\ldots,1,\Omega).
\]
Consequently, \(p_{J_*}=1\) and \(\pi_s=1\), so the absorption-time
law coincides with the conditional law along \(J_*\).
In particular, the Cauchy--Schwarz step in
\eqref{eq:acyclicpathmixture} is automatically an equality.
Equality in the path bound is therefore equivalent to equality
in Schur's inequality \eqref{eq:schurcomplete}, which holds if
and only if \(\theta_1=\cdots=\theta_s\).
Thus all holding rates are equal to some \(\lambda>0\), and
\(\tau\) is the sum of \(s\) independent exponential random
variables with rate \(\lambda\), hence has an Erlang distribution
with \(s\) phases.

Conversely, if \(\tau\) has an Erlang distribution with \(s\) phases,
then \eqref{eq:erlangratios} gives \(H_k(\tau)=(s+k)/(s+k-1)\) for
every \(k\geq1\).
\end{proof}

\section{Optimal radii of absolute monotonicity}
\label{sec:applications}
For a Runge--Kutta tableau $(A,b)$ satisfying $(K)_r$, the associated RK--PH
representation and the moment identities \eqref{eq:rkmomentsfour} give
\begin{equation}
 H_1(\tau)=\sigma\left(1+\frac2r\right),\qquad
 H_2(\tau)=\frac{r^2+6r+6}{(r+2)^2},
\label{eq:Hrk}
\end{equation}
for methods of orders at least two and three, respectively.
Combined with the moment bounds in Section~\ref{sec:variance},
these identities establish the conjectured radii of absolute monotonicity. The equality conditions also identify the
attaining tableaux uniquely, up to a simultaneous permutation of stages.
We write
\begin{equation}
 R_{s,p}=\sup_{(A,b)}R(A,b),
\label{eq:optimalRKradius}
\end{equation}
where the supremum is taken over $s$-stage Runge--Kutta tableaux
of order at least $p$.

\subsection{Second-order methods}
\label{sec:order2optimal}
The conjectured optimal radius of absolute monotonicity at order two is $2s$
\cite{ketcheson2009optimal}, attained by the equal-step composition
of the implicit midpoint rule. The following theorem gives the bound and uniqueness for general implicit tableaux.

\begin{theorem}
\label{thm:order2}
For every $s\geq1$, the optimal radius of absolute monotonicity is
\[
 R_{s,2}=2s.
\]
Up to a simultaneous permutation of stages, the unique attaining
tableau is the $s$-fold equal-step composition of the implicit midpoint
rule,
\begin{equation}
 A=\frac1{2s}\begin{pmatrix}
  1&&&\\
  2&1&&\\
  \vdots&\vdots&\ddots&\\
  2&2&\cdots&1\\
  2&2&\cdots&2&1
 \end{pmatrix},\qquad
 b=\frac1{s}\begin{pmatrix}
  1\\1\\\vdots\\1
 \end{pmatrix}.
\label{eq:midpointtableau}
\end{equation}
\end{theorem}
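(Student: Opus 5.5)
We will prove the upper bound $R(A,b)\le 2s$ through the RK--PH correspondence and the minimum-variance bound. We then identify the extremal tableau from the equality cases. Take an $s$-stage tableau of order at least two and any feasible $r>0$, so that $(K)_r$ holds. Proposition~\ref{prop:RKPH} gives an $s$-state representation $\PH(\phPi,\phT)$ with $0<\sigma\le1$. By \eqref{eq:Hrk}, $H_1(\tau)=\sigma(1+2/r)$. The extension of Theorem~\ref{thm:aldousfixed} to random initial states gives $H_1(\tau)\ge (s+1)/s$. Hence
\[
 1+\frac{2}{r}\;\ge\;\sigma\Bigl(1+\frac2r\Bigr)\;\ge\;1+\frac1s ,
\]
so $r\le 2s$. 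Since feasible $r$ form a set whose supremum is $R(A,b)$, this gives $R(A,b)\le 2s$. We next check that $(K)_{2s}$ holds for the composition tableau \eqref{eq:midpointtableau}. Exactly as in Example~\ref{ex:midpointPH}, $I+2sA$ is twice the lower-triangular all-ones matrix. Its inverse gives $\phT=\tfrac12(\text{bidiagonal } -1,1)$, which is a valid subintensity matrix, and $\phW=(0,\ldots,0,1)$ with $\sigma=1$. All inequalities in \eqref{eq:K} follow. Therefore $R_{s,2}=2s$.

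For uniqueness, suppose $(A,b)$ attains $R(A,b)=2s$. We first argue that $(K)_r$ is a closed condition in $r$ wherever $I+rA$ is invertible. We also need invertibility at $r=2s$: if $\det(I+2sA)=0$, then the Green matrices $(I+rA)$ blow up as $r\uparrow 2s$ while $\E\tau=r/\sigma\le r/\sigma$ stays bounded, because $\sigma\ge$ the bound forced by $H_1$. Hence $(K)_{2s}$ holds. Then both inequalities above are equalities. This gives $\sigma=1$, so $\phPi=\phW$ and there is no initial absorbing mass. It also gives $H_1(\tau)=(s+1)/s$, so by the equality case $\tau$ is Erlang with $s$ phases. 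Moreover, the total-variance argument forces $\E[\tau\mid X_0]$ to be constant and each conditional chain to attain the Aldous--Shepp bound. By Theorem~\ref{thm:aldousfixed}, each started chain is then an Erlang process: it visits every one of the $s$ states exactly once, in a fixed sequence, with a common rate $\kappa$.

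From $\E\tau=r/\sigma=2s=s/\kappa$ we get $\kappa=1/2$. We then reconstruct the tableau. Since the chain started from any state with $\pi_i>0$ must pass through all $s$ states in a deterministic order, exactly one initial state has positive mass; call it state $s$ after permutation. The path visits $s\to s-1\to\cdots\to1\to\Omega$. Every state is therefore visited, and each state's entire outgoing rate goes to its successor. The rows of $\phT$ are thus forced: $t_{ii}=-1/2$ and $t_{i,i-1}=1/2$, with no other transitions, since any additional rate would create a non-Erlang path. We then invert: $A=((-\phT)^{-1}-I)/(2s)$ and $b^\trans=\phW\Green/r$. These recover \eqref{eq:midpointtableau}, up to the simultaneous permutation of stages used in the relabeling.

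The main obstacle is the limiting step: guaranteeing that $(K)_{2s}$ itself holds, rather than only $(K)_r$ for $r<2s$, and that $I+2sA$ is invertible. If closedness fails, we will instead pass to a limit of the PH representations. The bounds $\E\tau=r/\sigma\le 2s\cdot (s/(s+1))^{-1}(1+2/r)$-type estimates keep $\Green=I+rA$ bounded on states reachable from $\phPi$. A second subtle point is that Aldous--Shepp's equality case, ``the chain is an Erlang process,'' must be read as holding for the chain restricted to states reachable from the start. Unreachable states are then excluded by the count: all $s$ states lie on the path.
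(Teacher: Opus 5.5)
Your overall route is the paper's: the upper bound from $H_1(\tau)=\sigma(1+2/r)$, $\sigma\le1$ and the minimum-variance bound; sharpness by checking $(K)_{2s}$ on the midpoint composition; and uniqueness from the equality case, with $\kappa=1/2$, followed by inverting the transform. Your observation that all $s$ states lie on the Erlang path, so that every row of $\phT$ is forced, is a useful detail the paper leaves implicit.

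The gap is the step you yourself flag as the main obstacle: showing that $r=2s$ is feasible, i.e.\ that $I+2sA$ is invertible. The argument you give for it does not work. The matrix $\Green=I+rA$ is affine in $r$ and cannot blow up as $r\uparrow 2s$. If $\det(I+2sA)=0$, what degenerates is $\Green^{-1}=(I+rA)^{-1}=-\phT$. A bound on $\E\tau=\phPi\Green\one=r/\sigma$ (from $\sigma\ge\frac{1+1/s}{1+2/r}$) controls $\Green$, not $\Green^{-1}$. It is therefore fully compatible with $I+r_nA$ converging to a singular matrix and produces no contradiction. Your fallback of bounding $\Green$ on reachable states has the same defect.

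The missing idea is that $(K)_r$ itself bounds the rates. The matrix $I+\phT=rA(I+rA)^{-1}$ is entrywise nonnegative with row sums at most one; this is the rate-one uniformization of Section~\ref{sec:coefficients}. Hence, along feasible $r_n\to 2s$, the matrices $(I+r_nA)^{-1}=I-r_nA(I+r_nA)^{-1}$ have entries in $[-1,1]$. Extract a convergent subsequence $(I+r_nA)^{-1}\to M$ and pass to the limit in $(I+r_nA)(I+r_nA)^{-1}=I$ to get $(I+2sA)M=I$. Continuity then carries the inequalities in \eqref{eq:K} to $r=2s$.

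This step is indispensable. For every $r<2s$ one has $1+2/r>1+1/s$, so your chain of inequalities has slack and no equality case is available. Only at $r=2s$ itself can you invoke the Aldous--Shepp characterization.
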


\begin{proof}
Let \((A,b)\) be the Butcher tableau of an \(s\)-stage SSP Runge–Kutta method of order at least two, and let \(r>0\) be feasible. The minimum-variance bound in Section~\ref{sec:momentratios}, together with \eqref{eq:Hrk} and \(\sigma\leq1\), gives
\begin{equation}
  1+\frac1s
\leq \sigma\left(1+\frac2r\right)
\leq 1+\frac2r.
\label{eq:bound-two}
\end{equation}
Hence \(r\leq2s\), and therefore \(R(A,b)\leq2s\). Since the tableau \eqref{eq:midpointtableau} has radius of absolute monotonicity \(2s\), this bound is sharp and \(R_{s,2}=2s\).

It remains to prove uniqueness. Suppose that \(R(A,b)=2s\). By the definition of \(R(A,b)\), there exists a sequence of feasible parameters \(r_n\to2s\).
The conditions $(K)_{r_n}$ imply that \(r_nA(I+r_nA)^{-1}\) is nonnegative with row sums at most one. Thus
\[
(I+r_nA)^{-1}=I-r_nA(I+r_nA)^{-1}
\]
is bounded. Passing to a convergent subsequence shows that
\(I+2sA\) is invertible, so \(r=2s\) is feasible.

At $r=2s$, let \((\phPi,\phT)\) be the PH
representation of \((A,b,2s)\) and $\tau\sim\PH(\phPi,\phT)$. The equality in the estimate \eqref{eq:bound-two} forces $\sigma=1$ and $H_1(\tau)=(s+1)/s$. Thus the minimum-variance bound theorem ensures that the representing chain is an
Erlang process with \(s\) phases.
 Since 
$\E\tau=r/\sigma=2s,$  the common rate is $\kappa=1/2$.
The Erlang representation gives
\[
\phT=\frac12
\begin{pmatrix}
-1&&&\\
1&-1&&\\
&\ddots&\ddots&\\
&&1&-1
\end{pmatrix},\quad
\Green
=2\begin{pmatrix}
1&&&\\
1&1&&\\
\vdots&\vdots&\ddots&\\
1&1&\cdots&1
\end{pmatrix},\quad
\phPi=(0,\cdots,0,1).
\]
The transform \eqref{eq:transform} gives
\[
A=\frac{\Green-I}{2s},
\qquad
b=\frac{1}{2s}(\phPi\Green)^\trans
=\frac1s\one.
\]
These are the coefficients in \eqref{eq:midpointtableau}.
Hence, any tableau attaining \(R(A,b)=2s\) coincides with
\eqref{eq:midpointtableau} after a simultaneous permutation
of stages. 
\end{proof}

\subsection{Third-order methods}
\label{sec:order3optimal}
At order three, the conjectured optimal radius of absolute monotonicity is
$s-1+\sqrt{s^2-1}$ \cite{ketcheson2009optimal}.
The SDIRK family of Ferracina and Spijker \cite{ferracina2008strong}
attains this value. We now prove its optimality and uniqueness
among general implicit tableaux.

\begin{theorem}
\label{thm:order3}
For every $s\geq2$, the optimal radius of absolute monotonicity is
\[
 R_{s,3}=\rho_s:=s-1+\sqrt{s^2-1}.
\]
Up to a simultaneous permutation of stages, the unique attaining
tableau is
\begin{equation}
 A=\begin{pmatrix}
  \gamma&&&&\\[1mm]
  \delta&\gamma&&&\\[1mm]
  \vdots&\vdots&\ddots&&\\[1mm]
  \delta&\delta&\cdots&\gamma&\\[1mm]
  \delta&\delta&\cdots&\delta&\gamma
 \end{pmatrix},\qquad
 b=\frac1s\begin{pmatrix}
  1\\[1mm]1\\[1mm]\vdots\\[1mm]1
 \end{pmatrix},
\label{eq:thirdtableau}
\end{equation}
where
\begin{equation}
 \gamma=\frac12\left(1-\sqrt{\frac{s-1}{s+1}}\right),\qquad
 \delta=\frac1{\sqrt{s^2-1}}.
\label{eq:thirdparams}
\end{equation}
\end{theorem}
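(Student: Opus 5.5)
The plan mirrors the proof of Theorem~\ref{thm:order2}, with Theorem~\ref{thm:thirdmoment} replacing the minimum-variance bound.

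\emph{Upper bound.} Let $(A,b)$ be an $s$-stage tableau of order at least three, and let $r>0$ be feasible. Build the PH representation of Proposition~\ref{prop:RKPH}. The chain conditions for $j\le 3$ hold, so \eqref{eq:Hrk} gives $H_2(\tau)=(r^2+6r+6)/(r+2)^2$; the factor $\sigma$ cancels, so no bound on $\sigma$ is needed. Theorem~\ref{thm:thirdmoment} then gives
\[
 \frac{r^2+6r+6}{(r+2)^2}\ \ge\ \frac{s+2}{s+1}.
\]
Cross-multiplying gives $(s+1)(r^2+6r+6)\ge(s+2)(r+2)^2$, which reduces to $r^2-2(s-1)r-2(s-1)\le0$. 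Hence $r\le s-1+\sqrt{(s-1)^2+2(s-1)}=s-1+\sqrt{s^2-1}=\rho_s$, and so $R(A,b)\le\rho_s$. For sharpness I would check directly that the tableau \eqref{eq:thirdtableau}--\eqref{eq:thirdparams} has order three and that $(K)_{\rho_s}$ holds. The cleanest route is to compute $I+\rho_sA$ and confirm that $-(I+\rho_sA)^{-1}$ is a valid subintensity matrix with $\phW\ge\zero$ and $\sigma\le1$. Alternatively, one can cite Ferracina--Spijker.

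\emph{Uniqueness.} Suppose $R(A,b)=\rho_s$. The compactness argument from Theorem~\ref{thm:order2} carries over unchanged: $(I+r_nA)^{-1}$ stays bounded, so $r=\rho_s$ is itself feasible. At $r=\rho_s$ equality holds in Theorem~\ref{thm:thirdmoment}, so $\tau$ is Erlang with $s$ phases at some rate $\kappa$. Two quantities remain to be identified.

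\begin{enumerate}
\item \emph{The rate and $\sigma$.} The moment formulas give $\E\tau=s/\kappa=r/\sigma$ and $\E\tau^2=s(s+1)/\kappa^2=(r^2+2r)/\sigma$. Eliminating $\kappa$ yields $\sigma=s(r+2)/((s+1)r)$, and then $\kappa=s\sigma/r$. Using $r^2=2(s-1)(r+1)$, I would verify that $\sigma\le1$ and that $\kappa=1/(1+\rho_s\gamma)$, so that $-1/\kappa$ matches the diagonal of $-(I+\rho_sA)$.
\item \emph{The representation itself.} Equality in Theorem~\ref{thm:thirdmoment} asserts only that the \emph{law} is Erlang. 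I need the representing chain $(\phPi,\phT)$ to be the sequential Erlang chain up to permutation. The proof of Theorem~\ref{thm:thirdmoment} forces $\eta_1=\eta_2=0$, i.e.\ no return to the initial state. It also forces $Z$ to attain equality at $s-1$ states with $\Prob(Z>0)=1$, since $\varepsilon_v=0$ requires $p=1$. Inducting on this, I would conclude that the chain visits the states in a fixed order $s\to s-1\to\cdots\to1\to\Omega$ with common rate $\kappa$ and no direct absorption. In particular $\phT$ is the bidiagonal matrix $\kappa(-I+N)$, where $N$ is the subdiagonal shift, and $\phPi=(0,\dots,0,1)$. This is the same step the order-two proof asserts, and I expect it to be the main obstacle to make rigorous.
\end{enumerate}

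\emph{Recovering the tableau.} With $\Green=(-\phT)^{-1}=\kappa^{-1}(I-N)^{-1}$, the matrix $\kappa\Green$ is lower triangular with all entries equal to one. Then
\[
 A=\frac{\Green-I}{\rho_s},\qquad b=\frac{\sigma}{\rho_s}(\phPi\Green)^\trans .
\]
This gives diagonal entries $\gamma=(1/\kappa-1)/\rho_s$ and subdiagonal entries $\delta=1/(\kappa\rho_s)$. Also $b=(\sigma/(\kappa\rho_s))\one=\frac1s\one$, using $\sigma/\kappa=r/s$. The final step is routine algebra: confirm that $\gamma$ and $\delta$ agree with \eqref{eq:thirdparams}. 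The identity $\kappa\rho_s=\sqrt{s^2-1}$ should follow from $\kappa=s\sigma/\rho_s=s(\rho_s+2)/((s+1)\rho_s)$, and it gives $\delta=1/\sqrt{s^2-1}$.
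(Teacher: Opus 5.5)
Your proposal is the paper's proof. It uses the same $\sigma$-free identity $H_2(\tau)=(r^2+6r+6)/(r+2)^2$ against Theorem~\ref{thm:thirdmoment}, the same compactness argument making $\rho_s$ feasible, and the same reconstruction $A=(\Green-I)/\rho_s$, $b=\frac{\sigma}{\rho_s}(\phPi\Green)^\trans$ from the Erlang chain.

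The law-versus-chain issue you flag is real. The paper settles it by invoking the structural equality case of Aldous--Shepp inside the proof of Theorem~\ref{thm:thirdmoment}. Your induction also works: $(-\mathbf B)^{-2}$ and $(-\mathbf B)^{-1}$ have the same support, so $\eta_1=0$ means no return to the initial state, and then $\widehat{\phT}$ agrees with $\mathbf B$ on the reachable states. For a random initial row you need one more line. Suppose $\pi_i,\pi_j>0$ with $i\neq j$. Each of $i,j$ launches a path through all $s$ states along which the mean remaining time $\E_k\tau$ strictly decreases. This gives $\E_j\tau<\E_i\tau<\E_j\tau$, so $\phPi$ is a point mass.

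The genuine slip is in your algebra for $\sigma$ and $\kappa$.
\begin{itemize}
\item Equating $\mu_2/\mu_1^2=\sigma(r+2)/r$ with the Erlang value $(s+1)/s$ gives $\sigma=(s+1)r/\bigl(s(r+2)\bigr)$, the reciprocal of what you wrote. Your version is $\le1$ only if $r\ge2s$, so at $r=\rho_s<2s$ your planned check $\sigma\le1$ would fail.
\item Correspondingly, $\kappa=s\sigma/\rho_s=(s+1)/(\rho_s+2)$. The paper reads this off directly from $\mu_2/\mu_1=(s+1)/\kappa=r+2$. Your expression $s(\rho_s+2)/((s+1)\rho_s)$ would not give $\kappa\rho_s=\sqrt{s^2-1}$.
\end{itemize}
With the correct $\kappa$, that identity follows from $\rho_s\bigl(s+1-\sqrt{s^2-1}\bigr)=2\sqrt{s^2-1}$. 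Then $\gamma=(1/\kappa-1)/\rho_s$ and $\delta=1/(\kappa\rho_s)$ come out as in \eqref{eq:thirdparams}. Your derivation of $b=\frac1s\one$ from $\sigma/\kappa=r/s$ is unaffected.
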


\begin{proof}
Let \((A,b)\) be the Butcher tableau of an \(s\)-stage SSP
Runge--Kutta method of order at least three, and let \(r>0\)
be feasible. Theorem~\ref{thm:thirdmoment} and \eqref{eq:Hrk} give
\begin{equation}
 \frac{s+2}{s+1}\leq\frac{r^2+6r+6}{(r+2)^2}.
\label{eq:bound-three}
\end{equation}
Equivalently, \(r^2-2(s-1)r-2(s-1)\leq0\), so
\(R(A,b)\leq\rho_s\). Since the third-order tableau
\eqref{eq:thirdtableau} has radius of absolute monotonicity
\(\rho_s\), this bound is sharp and
\(R_{s,3}=\rho_s\).

Suppose that \(R(A,b)=\rho_s\).
The argument in the proof of Theorem~\ref{thm:order2} shows that
\(r=\rho_s\) is feasible. 

Let \((\phPi,\phT)\) be the PH
representation of \((A,b,\rho_s)\), and let
\(\tau\sim\PH(\phPi,\phT)\). At \(r=\rho_s\),
\eqref{eq:bound-three} gives \(H_2(\tau)=(s+2)/(s+1)\).
The equality characterization in Theorem~\ref{thm:thirdmoment} shows that the representing chain
is an Erlang process with \(s\) phases.
 According to moment identities \eqref{eq:rkmomentsfour} and \eqref{eq:erlangmoments}, the common rate \(\kappa\) is determined by
\[
\kappa=\frac{s+1}{\rho_s+2}
=\frac{s+1-\sqrt{s^2-1}}{2}.
\]
The Erlang representation gives
\[
 \Green=\frac1\kappa
 \begin{pmatrix}
 1&&&\\
 1&1&&\\
 \vdots&\vdots&\ddots&\\
 1&1&\cdots&1
 \end{pmatrix},\qquad
 \phPi=(0,\ldots,0,1).
\]
The transform \eqref{eq:transform} gives
\[
 A=\frac{\Green-I}{\rho_s},\qquad
 b=\frac{\sigma}{\rho_s}(\phPi\Green)^\trans
   =\frac1s\one.
\]
Since 
\[\frac{1-\kappa}{\rho_s\kappa}=\frac12\left(1-\sqrt{\frac{s-1}{s+1}}\right)=\gamma,\qquad
\frac{1}{\rho_s\kappa}=\frac1{\sqrt{s^2-1}}=\delta,\] these are the coefficients in
\eqref{eq:thirdtableau}. Hence, any tableau attaining
\(R(A,b)=\rho_s\) coincides with \eqref{eq:thirdtableau}
after a simultaneous permutation of stages.
\end{proof}

\begin{remark}
The third-order upper bound and its equality characterization
use only the chain order conditions, through the 
moment identities in \eqref{eq:rkmomentsfour}.
The unique attaining tableau also satisfies the remaining
condition \[b^\trans(c\cdot c)=1/3.\]
On the one hand, it implies that this branching condition does
not sharpen the bound on the optimal radius of absolute monotonicity.
On the other hand, the tableau \eqref{eq:thirdtableau} remains
uniquely optimal in a larger class
of SSP Runge--Kutta methods, which have third-order accuracy for
linear problems.
\end{remark}

\section{Conclusion}
\label{sec:conclusion}
In this paper, we established a correspondence between SSP Runge--Kutta tableaux and
finite PH distributions. Under the absolute-monotonicity conditions, the
relation $\Green=I+rA$ links the coefficient matrix $A$ with the Green matrix
$\Green$, the output weights $b$ correspond to the initial distribution $\phPi$. Also, the chain order conditions can be transformed to the moments of absorption-time. 
We proved a sharp
third-moment ratio bound via state elimination and a nonnegative algebraic
decomposition and we established the general moment ratio bound for all acyclic PH distributions. Combining these moment bounds with the RK--PH
correspondence, we proved the conjectured optimal radii of absolute monotonicity $R_{s,2}=2s$
and $R_{s,3}=s-1+\sqrt{s^2-1}$ for general implicit tableaux, and
classified the unique attaining tableaux up to simultaneous stage permutations.

A possible direction for future work is to extend the present probabilistic framework to SSP two-step Runge--Kutta methods \cite{ketcheson2011strong}. Such an extension may involve CTMC models with two absorbing states, corresponding to the history values $u_n$ and $u_{n-1}$.

\section*{Declarations}
The authors used AI tools to assist with
proof development and manuscript drafting. The authors have verified all the mathematical arguments and take full responsibility for the content of this work.

\raggedbottom
\bibliographystyle{siamplain}
\bibliography{references}

\end{document}